\documentclass[12pt,fleqn]{article}
\usepackage{amsmath,amssymb}
\usepackage{multicol}
\usepackage{amsthm}
\usepackage[dvips]{graphicx}
\usepackage[]{graphicx}
\usepackage{color}
\usepackage{hyperref}
\usepackage{natbib}
\usepackage{float}
\usepackage[utf8]{inputenc}
\usepackage{enumerate}
\usepackage[multiple]{footmisc}
\usepackage[table]{xcolor}
\usepackage{subcaption}
\usepackage{booktabs}
\usepackage{array}
\usepackage{ulem}

\usepackage[english]{babel}
\usepackage[autostyle, english=american]{csquotes} % smart quotes
\MakeOuterQuote{"} % Use " for outer quotes

\allowdisplaybreaks

\numberwithin{equation}{section}

\makeatletter
\renewcommand{\section}{\@startsection{section}{1}{0pt}{20pt}{6pt}{\large\bf}}
\renewcommand{\@seccntformat}[1]{\csname the#1\endcsname.\ }

\def\footnoterule{\kern -3pt \hrule width 2.7 true cm \kern 2.6pt}

\newcommand{\yk}[1]{\textcolor{red}{#1}}

\newcommand{\nada}[1]{}

\newtheorem{theorem}{Theorem}[section]
\newtheorem{lemma}[theorem]{Lemma}

\theoremstyle{remark}

\newtheorem{remark}[theorem]{Remark}

\newtheorem{example}[theorem]{Example}

\begin{document}

\title{\textbf{On the First Hitting Time Problems \\
for Diffusion Processes: \\
Local Time-Space Approach}}

\author{\large{Jerome Detemple}\thanks{Questrom School of Business, Boston University, Boston, MA, USA}, \large{ Yerkin Kitapbayev}\thanks{Mathematics Department,
Khalifa University of Science and Technology, PO Box 127788, Abu Dhabi, United Arab Emirates }, 
\large{  Danila Shabalin}\thanks{Faculty of Mechanics and Mathematics, Lomonosov Moscow State University, Moscow, Russia} \thanks{Vega Institute Foundation, Moscow, Russia}
}

%\date{}

\maketitle

%%%%%%%%%%%%%%%%%%%%%%%%%%%%%%%%%%%%%%%%%%%%%%%%%%%%%%%%%%%%%%%%%%%%%%%%%%%%%%%
%%% Abstract %%%
%%%%%%%%%%%%%%%%%%%%%%%%%%%%%%%%%%%%%%%%%%%%%%%%%%%%%%%%%%%%%%%%%%%%%%%%%%%%%%%

%{\par \leftskip=2.6cm \rightskip=2.6cm \footnotesize

\begin{abstract}
Using the local time-space calculus of \cite{P-2005} and the method developed in \cite{M-2010}, we derive a new integral representation for the distribution of the first-passage time (FPT) of a diffusion process through a time-dependent barrier. We present a complete three-step numerical algorithm: first, the problem is reduced to a Volterra-type integral equation; second, its kernel is approximated by a Markov chain; finally, the resulting equation is solved using a quadrature method. The method is implemented for several representative examples, and its convergence properties are established. An extension to double barrier problems is carried out.
\end{abstract}

\section{Introduction}

The determination of the density of the first-passage time to a barrier is a problem of longstanding interest. From a purely theoretical point of view, the problem gives rise to non-trivial mathematical characterizations, raising questions of existence, uniqueness, and numerical computation of solutions. From an application perspective, the density of the FPT plays a key role in various areas, including Biology, Chemistry, Physics, Finance, and Economics. In this paper, we provide a new characterization of the distribution of the FPT of a diffusion process to a time-dependent barrier and propose numerical algorithms for resolution. 

Our contributions can be summarized as follows. First, we characterize the distribution of the FPT of a diffusion process to a continuously differentiable lower barrier as a functional involving the solution to a linear Volterra integral equation (VIE) of the first kind. The underlying diffusion is arbitrary, subject only to standard conditions ensuring the existence of a unique strong solution and a smooth transition density function (tdf). The resulting FPT distribution function is then represented as the difference of two terms. The first term is the probability that the terminal value of an auxiliary diffusion lies below the initial barrier level. The second accounts for barrier crossings during the remaining time interval and is expressed as an integral with respect to the local time of the auxiliary process at the barrier. Its integrand involves the right derivative of the FPT cumulative distribution function of the auxiliary process, which is characterized as the solution of an associated VIE.

Second, we extend the analysis to the double-barrier setting with upper and lower barriers. As in the single-barrier case, the FPT probability admits a decomposition into two terms. The first is the probability that the terminal value of the auxiliary process lies outside the barrier region. The second is a local time component involving the spatial derivatives of the hitting-time distribution at the upper and lower barriers. These derivatives satisfy a system of coupled first-kind VIEs.

 We also note that the results above are related to the work of  \cite{M-2010}, where the local time-space calculus of \cite{P-2005} is instrumental in deriving pricing formulas for double-barrier options under one-dimensional time-homogeneous diffusions. Since barrier-option prices are naturally expressed in terms of first-hitting time distributions, some of the integral identities derived in the present paper can, after suitable normalization and reformulation, be recovered from the equations in \cite{M-2010} when the diffusion coefficients are time independent. Our paper extends this framework to the time-inhomogeneous case and advances the theory in several directions as explained below. 
 
Third, we propose a numerical scheme for solving the VIE and establish its convergence and error properties. The scheme relies on a regularization of the integral equation through a multiplicative square-root-of-time transformation. The resulting VIE is solved using the product integration method of \cite{L-1985}. We demonstrate the approach by computing FPT probabilities for the Bessel, CEV, and Feller processes. For diffusions whose transition densities are unavailable in closed form or are computationally intractable, we employ a Markov chain approximation of the underlying diffusion.

Fourth,  we derive, for the geometric Brownian motion (GBM) with constant parameters, a Volterra integral equation of the second kind for the spatial derivative, evaluated at the barrier, of the first-passage time probability. The same argument can be extended with only minor modifications to the time-inhomogeneous Brownian motion, GBM, and Ornstein-Uhlenbeck (OU) processes considered in this paper. This second-kind formulation is particularly attractive computationally, since VIEs of the second kind are typically better conditioned and more numerically stable than their first-kind counterparts. Extending this formulation to general diffusions is left for future work.

Fifth, we extend the method to a class of stochastic volatility models with a multiplicatively separable volatility coefficient. The state variable in volatility follows a correlated diffusion process. In this instance, we apply a conditional Monte Carlo method, inspired by \cite{W-1997} and \cite{LP-2009}. Conditional on the trajectories of the state variable to the terminal date, the distribution of the first-hitting time of a barrier satisfies the general formula with the associated VIE described above. The FPT probability can then be calculated by taking the expectation with respect to the Wiener measure associated with the Brownian motion driving the evolution of the state variable. For computation, this expectation is estimated by averaging over trajectories of the state variable over the entire period.

The paper relates to several branches of the literature. A large body of literature characterizes the density of the first-passage time using a Volterra integral equation, as in this paper. The pioneering contribution in this direction is due to \cite{F-1943}, who derives a VIE of the first kind for the FPT density of a Markovian process. \cite{S-1951} establishes a connection between VIEs in the time domain and differential equations in the Laplace domain. \cite{D-1971} shows how to transform the VIE of the first kind into a second-kind VIE for Brownian motion, facilitating numerical computation. \cite{PS-1976} extend the analysis to curved boundaries and provide an efficient discretization scheme for computation. \cite{BNR-1987} develop an algorithm, the "BNR algorithm", for the case of Brownian motion, generalizing the regularization of the VIE. \cite{RS-1988} provide a detailed study of the FPT density and its moments for the Ornstein-Uhlenbeck process with a constant boundary, including asymptotic analysis. \cite{GNRS-1989} adapt the BNR algorithm to handle one-dimensional diffusions. Continuing Durbin's earlier contribution to curved boundaries, \cite{DW-1992} expands the FPT density as a series of multiple integrals, providing error bounds for truncation, which allows for efficient computation. \cite{GRRT-1997} extende the VIE approach of \cite{GNRS-1989} to time-inhomogeneous diffusions. Furthermore, \cite{ST-1996} use series expansions based on a fixed-point theorem to solve this VIE and provide error bounds for the resulting approximations.

 %Thus, the principal novelty of the paper lies in the treatment of time-dependent dynamics, the accompanying numerical methodology and convergence analysis, and the second-kind Volterra formulation. 

Further advances in this century exploiting VIEs pertain to theory and numerical implementation. \cite{P-2002} derives a system of integral equations from a single "master equation" which can be viewed as a Chapman-Kolmogorov equation of Volterra type. \cite{DNPR-2001} simplify the VIE for the case of Gauss-Markov processes, facilitating computation. \cite{RST-2008} develop a system of integral equations for the survival probability and the conditional distribution of the process at the crossing time, providing additional information about the process behavior. Focusing on Cherkasov (reducible) processes, \cite{LK-2020} use the heat potential approach to derive a new VIE for the FPT density. \cite{A-2003} allows for stochastic boundaries and simplifies the VIE by approximating the underlying diffusion. \cite{ZS-2009} consider the inverse problem, namely finding the boundary given the FPT distribution. To that end, they develop a numerical algorithm based on the discretization of the VIE. \cite{BSZ-2013} extend the VIE approach to a bivariate degenerate diffusion process, specifically integrated Brownian motion and the integrated OU process.

Although the literature on VIE-based approaches is voluminous, there are other methods for characterization and computation of the density of the FPT. Alternatives that have been explored include weak approximations, such as \cite{G-2000} and \cite{LB-2023}; Monte Carlo simulation, e.g., \cite{IK-2011} and \cite{HZ-2019}; PDE methods, e.g., \cite{PW-2008} and \cite{BCGS-2021}; series expansions applied to mean-reverting processes such as the OU and CIR models, e.g., \cite{L-2004} and \cite{APP-2005}; and Fourier transform methods applied to the Heston model, e.g., \cite{MP-2009}. One should also mention the substantial literature that applies these methods in other contexts. Applications in Natural Sciences include, for instance, \cite{R-2001}, \cite{HCB-2010}, and \cite{LAM-2016}. Applications in Finance comprise the early work of \cite{BC-1976}, and subsequent contributions to corporate finance, e.g., \cite{L-1994}, credit risk, e.g., \cite{AZ-2001}, \cite{LK2-2020}, and barrier option pricing, e.g., \cite{M-2010}, \cite{DKS-2025}.  

Finally, since our numerical algorithm relies, in some cases, on a Markov chain approximation (MCA) of the transition density of the underlying diffusion, one may also mention alternative approximation approaches, such as Hermite polynomial expansion (\cite{BEZ-2025}), lower-upper bound approximation (\cite{GDD-2004}), heat kernel expansion (\cite{PP-2012}), and neural networks (\cite{STN-2025}).

The paper is organized as follows. Section \ref{S:model} describes the model. Section \ref{S:single} derives the VIE for the distribution of the FPT in the case of a single barrier. Section \ref{S:double_A} derives equations for cases involving double barriers.  Section \ref{S:properties} shows the existence and uniqueness of the solution and presents a numerical procedure for computation. Section \ref{sec:VIESK} derives the VIE of the second kind. Section \ref{S:MCA} describes a Markov chain approximation for numerical computation and provides an application of the VIE-MCA approach to a Feller process. Section \ref{S: volatility} extends the analysis to a class of stochastic volatility models. Results on the convergence of the numerical scheme are established in the Appendix \ref{S:APP}.

\section{Preliminaries}
\label{S:model}
Consider a stochastic basis $(\Omega, \mathcal{F}, \mathbb{P}; \mathbb{F} = (\mathcal{F}_t)_{t \ge 0})$ 
where the filtration $\mathbb{F}$ satisfies the usual conditions  (right-continuity and completeness with respect to
$ \mathbb{P}$--null sets). On this space, let us assume a one-dimensional diffusion process $X = \left(X_t\right)_{t\ge 0}$ with drift $\mu: \mathbb{R}_+\times \mathbb{R} \to \mathbb{R}$ and infinitesimal variance $\sigma: \mathbb{R}_+ \times \mathbb{R} \to \mathbb{R}$, governed by a stochastic differential equation (SDE):
\begin{equation} \label{SDE}
    d X_t= \mu(t,X_t) \,dt + \sigma(t,X_t) \,d W_t, \quad X_0 = x \in \mathbb{R},
\end{equation}
where $W = \left(W_t\right)_{t\ge 0}$ is a standard $\mathbb{F}$--Brownian motion (SBM) under the measure $\mathbb{P}$. We suppose that the SDE \eqref{SDE} admits a unique strong solution. To guarantee this, one may impose the It\^o or Yamada and Watanabe conditions (see \cite{KZ-1981}). The solution is Markovian, with transition probability $\mathbb{P}_{t,x}(X_s \in A) = \mathbb{P}(X_s \in A \mid X_t = x),$ defined for $0 \le t < s$ and any Borel set $A \subset \mathbb{R}$. Furthermore, we assume the existence of the transition density function $p(s,y;t,x)$ determined by the relation $\mathbb{P}(X_s \in A \mid X_t = x) = \int_A p(s, y; t, x)\,dy$. For instance, this holds if the coefficients are H\"older continuous and the diffusion term $\sigma(t,x)$ is uniformly elliptic: there exists an $\epsilon >0$ such that $\sigma^2(t,x) \ge \epsilon$ for all $(t,x)$ in the domain (see  Chapter 5 in \cite{KS-1991}), although this condition is far from necessary. We further impose the smoothness of the transition density, ensured by an additional regularity conditions on the coefficients (see Chapter 1 in \cite{F-1964}). 
We define the infinitesimal generator $\mathbb{L}_X$ of $X$ as the following second-order linear differential operator
\begin{equation}
\mathbb{L}_X  = \mu(t,x)\, \partial_ x + \frac{1}{2} \sigma^2(t,x) \,\partial^2_{xx}.   
\end{equation}

Throughout the paper, we may work on a finite time horizon. Accordingly, let $T>0$ be an arbitrary but fixed terminal time, and introduce the auxiliary process $Y = \left(Y_t\right)_{t\ge 0}$, whose coefficients are the time-reversed versions of $\mu$ and $\sigma$ with respect to $T/2$. More precisely, $Y$ is defined as the solution to the SDE
\begin{equation}
    d Y_t= \mu(T-t,Y_t) dt + \sigma(T-t,Y_t) d W_t, \quad Y_0=x,
\end{equation}
for $t\in[0,T]$. We denote the tdf of $Y$ by $q(s,y;t,x)$ for $0 \le t < s \le T$. Let $\mathbb{L}_Y$ denote the generator of $Y$. Clearly, if $\mu$ and $\sigma$ are time-independent, then $Y$ coincides with $X$.

We also introduce the following standard notation. For $y\in\overline{\mathbb{R}}$, we write $f(x)\sim g(x)$ as $x\to y$ if $\lim_{x\to y} f(x)/g(x)=1$, $f(x)=\mathcal{O}(g(x))$ as $x\to y$ if there exist a constant $C>0$ and a neighborhood of $y$ such that $|f(x)|\le C\,|g(x)|$ for all $x$ in that neighborhood, and $\|\cdot\|$ denotes the standard supremum norm.

\section{First Hitting Time: Single Barrier}
\label{S:single}

Let us consider a time-dependent lower barrier $b: \mathbb{R}_+ \to \mathbb{R}$. To apply the local time-space formula below and ensure the existence and uniqueness of the resulting equations, we assume that $b(t) \in C^1\left(\mathbb{R}_+\right).$
We define the first-hitting time
\begin{equation}
    \tau_b=\inf\{t\ge 0:X_t\le b(t)\}
\end{equation}
with $X_0=x>b(0)$. To avoid trivial cases, only attainable boundaries are considered, i.e., those for which $ \mathbb{P}(\tau_b < \infty) > 0$. We additionally assume that $\sigma(t,b(t))\neq0$ for all $t\in[0,T]$.
\begin{example}
A zero boundary $b=0$ is not attainable for a geometric BM starting from $X_0 > 0$. To validate the attainability of a constant barrier $b$ in the homogeneous case, one can apply Feller's test for explosions (\cite{KS-1991}, p. 342).
\end{example}
The primary object of our analysis is the cumulative hitting probability $G(t, x):=\mathbb{P}(\tau_b\le t )$. 
The following theorem provides a semi-analytical representation in the spirit of \cite{M-2010}, where related representations were derived for barrier-option prices under time-homogeneous diffusions. More precisely, the survival probability associated with a first-passage time can be interpreted as the price of a no-touch, or down-and-out digital, claim, while the corresponding distribution function is obtained by taking its complement. Consequently, in the time-homogeneous setting, the specializations of the following two theorems can be recovered from the single-barrier local time representation of \cite{M-2010}, after taking complements and reconciling the kernel normalizations. For completeness, and to adapt the argument to the notation and first-hitting time formulation used here, we provide a concise proof of the first result. 
In Section~\ref{sec:VIESK}, we derive a VIE of the second kind for the function f(t) introduced below, a result that is not contained in \cite{M-2010}.

\begin{theorem} \label{cdf-single}
The cumulative distribution function $G(t,x)$ of the first-hitting time of the lower boundary $b(t)$ is given by
\begin{align}\label{G-single}
   G(t,x)=\mathbb{P}_{T-t,x}\left(Y_T\le b(0)\right)-\frac{1}{2}\int_0^{t} f(u)\, q(T-u, b(u) ; T-t, x)\, du
\end{align}
for $t\in[0,T]$, where $f(t)=G_x(t, b(t)+) \,\sigma^2(t, b(t))$ and $q$ is the transition density function of the process $Y$.
\end{theorem}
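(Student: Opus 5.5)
Throughout, $G(t,x)$ is read as the hitting probability by time $t$ for $X$ started at $x$ at time $0$. We extend it to all $x\in\mathbb{R}$ by setting $G(t,x)=1$ for $x\le b(t)$; this is consistent because $\tau_b=0$ when the process starts on or below the barrier.

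The plan is to apply the local time-space formula of \cite{P-2005} to $G$ along the trajectory of the auxiliary process $Y$, run in reversed time. Fix $t\in[0,T]$ and define
\[
F(s,y):=G(T-s,y), \qquad s\in[T-t,T].
\]
The process $Y$ at time $s$ has coefficients $\mu(T-s,\cdot)$ and $\sigma(T-s,\cdot)$. In the variable $u=T-s$ these are exactly the coefficients of $X$ at time $u$, which is what makes $F(s,Y_s)$ a natural object.

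\textbf{Step 1 (PDE for $G$).} Above the barrier, the Markov property together with Kolmogorov's backward equation, written in the "time remaining" variable, gives
\[
\partial_t G=\mathbb{L}_X G \quad \text{on } \{x>b(t)\},
\]
with the following side conditions:
\begin{itemize}
\item $G(0,x)=0$ for $x>b(0)$;
\item $G=1$ on and below the barrier.
\end{itemize}
Hence $G$ is $C^{1,2}$ on both sides of the curve $x=b(t)$, which is $C^1$. Moreover $G$ is continuous across the curve, since $G(t,b(t)+)=1$ by attainability and regularity at the boundary. Across the curve, $G_x$ jumps from $0$ (below) to $G_x(t,b(t)+)$ (above).

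\textbf{Step 2 (local time-space formula).} Apply Peskir's formula to $F(s,Y_s)$ on $[T-t,T]$, started from $Y_{T-t}=x$. The drift terms vanish on each side of the barrier, because
\[
\partial_s F+\mathbb{L}_Y F=-\partial_t G+\mathbb{L}_X G=0
\]
above the barrier, and $F\equiv 1$ below it. What remains is
\[
F(T,Y_T)=F(T-t,x)+M+\tfrac12\int_{T-t}^{T}\bigl(F_y(s,c(s)+)-F_y(s,c(s)-)\bigr)\,d\ell^c_s,
\]
where $c(s)=b(T-s)$, $\ell^c$ is the local time of $Y$ at the curve $c$, and $M$ is a martingale. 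Taking expectations, and using $F(T,y)=G(0,y)=\mathbf 1_{\{y\le b(0)\}}$ and $F(T-t,x)=G(t,x)$, we get
\[
\mathbb{P}_{T-t,x}(Y_T\le b(0))=G(t,x)+\tfrac12\,\mathbb{E}\int_{T-t}^{T}G_x(T-s,c(s)+)\,d\ell^c_s .
\]

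\textbf{Step 3 (expected local time).} Use the identity
\[
d\,\mathbb{E}[\ell^c_s]=\sigma^2(T-s,c(s))\,q(s,c(s);T-t,x)\,ds,
\]
which holds for the semimartingale local time normalized as in \cite{P-2005}; it follows from the occupation-times formula. Then substitute $u=T-s$: the factor $G_x(u,b(u)+)\,\sigma^2(u,b(u))$ becomes $f(u)$, the density becomes $q(T-u,b(u);T-t,x)$, and the limits run over $u\in[0,t]$. Rearranging the identity from Step 2 yields \eqref{G-single}.

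\textbf{Main obstacle.} The delicate part is justifying Step 2 rigorously:
\begin{itemize}
\item the hypotheses of Peskir's formula (local boundedness of $F_s+\mathbb{L}_YF$ and of the one-sided derivatives near the curve);
\item that $M$ is a true martingale, since $G_x$ is bounded away from $s=T$;
\item integrability of $G_x(u,b(u)+)$ near $u=0$, where it blows up like $u^{-1/2}$.
\end{itemize}
For these I would localize on $[T-t,T-\varepsilon]$, take expectations there, and then let $\varepsilon\downarrow0$. Dominated convergence applies in this limit because the $u^{-1/2}$ singularity of $G_x$ is integrable against the bounded density $q$. The term at $\varepsilon$ converges to the terminal probability by weak continuity of the law of $Y$, using that $Y_T$ has a density and so puts no mass on $b(0)$.
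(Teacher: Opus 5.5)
Your argument follows the paper's proof step for step: the reversal $F(s,y)=G(T-s,y)$, Peskir's formula for $F(s,Y_s)$ on $[T-t,T]$, the occupation formula for $d\,\mathbb{E}[\ell^c_s]$, and the substitution $u=T-s$. Your localization at $s=T-\varepsilon$ supplies details the paper skips.

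The genuine gap is Step 1, which the paper's proof also simply asserts. Once $b$ or the coefficients depend on time, $G(t,x)=\mathbb{P}_{0,x}(\tau_b\le t)$ does not solve $\partial_tG=\mathbb{L}_XG$ on $\{x>b(t)\}$ with $G=1$ on $\{x\le b(t)\}$.

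\begin{itemize}
\item Your justification of the extension, that $\tau_b=0$ when the process starts on or below the barrier, covers only $x\le b(0)$. For $b(0)<x\le b(t)$ the path starts above the barrier. For Brownian motion, for instance, it then stays above the barrier on $[0,t]$ with positive probability, so $G(t,x)<1$.
\item Let $v(s,y)$ be the probability that $X$, started at $y$ at time $s$, meets $b$ during $[s,t_0]$. The backward equation is $\partial_s v+\mathbb{L}_Xv=0$ on $\{y>b(s)\}$, with coefficients at time $s$. Passing to time to go, $t_0-s$, evaluates coefficients and barrier at $t_0$ minus the time to go, not at the time to go itself. Moreover, $G(t,x)=v(0,x)$ with horizon $t_0=t$, and that horizon moves with $t$.
\end{itemize}

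Here is a concrete check. Take $X=W$ and $b(t)=a+\beta t$ with $\beta>0$.
\begin{itemize}
\item $G(t,x)$ is the probability that Brownian motion with drift $-\beta$, started at $x-a$, reaches $0$ before $t$. Hence $G_t=\tfrac12G_{xx}-\beta G_x$ on $\{x>a\}$.
\item Your right-hand side, with $Y=W$ and $c(s)=a+\beta(T-s)$, is the probability that Brownian motion with drift $+\beta$, started at $x-a-\beta t$, reaches $0$ before $t$.
\item At $x=b(t)$ the second quantity equals $1$ and the first does not.
\end{itemize}

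Consequently, Steps 2--3 are sound but prove the representation for the solution of your Step 1 problem. That solution is $\mathbb{P}_{T-t,x}\bigl(Y_s\le c(s)\text{ for some }s\in[T-t,T]\bigr)$: the probability that the process with reversed coefficients hits the reversed curve $b(t-\cdot)$. This coincides with $\mathbb{P}_{0,x}(\tau_b\le t)$ when $\mu$ and $\sigma$ are time independent and $b$ is constant. That is the setting in which your proof, and the paper's, is valid.

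For genuinely time-dependent data, apply Peskir's formula instead to $v(r,X_r)$ on $[0,t]$. Here $v(r,y)$ is the probability that $X$, started at $y$ at time $r$, meets $b$ during $[r,t]$. This gives
\[
G(t,x)=\mathbb{P}_{0,x}\bigl(X_t\le b(t)\bigr)-\frac12\int_0^t v_y(r,b(r)+)\,\sigma^2(r,b(r))\,p(r,b(r);0,x)\,dr .
\]
The cost is that the unknown boundary derivative now depends on the horizon $t$. A single Volterra equation therefore no longer serves all $t\in[0,T]$.
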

\begin{proof}
First, we reformulate the original problem for $G(t,x)$ as the following partial differential equation of \cite{K-1931} with boundary and initial conditions:
\begin{align}
   & G_t(t,x) = \mathbb{L}_X G(t,x),\quad x>b(t), \\
   & G(t,x) =1, \quad x\le b(t),\\
   & G(0,x) =0, \quad x>b(0).
\end{align}
Next, we introduce the time-reversed functions $F(t,x):=G(T-t,x)$ and $c(t):=b(T-t)$. Then $F(t,x)$ satisfies the backward problem
\begin{align}
&F_t(t,x)+\mathbb{L}_YF(t,x)=0,\quad t\in[0,T), \quad x>c(t), \\
&F(t,x)=1, \quad t\in[0,T], \quad x\le c(t),\\
&F(T,x)=0, \quad x>c(T).
\end{align}
We now apply the change-of-variable formula of \cite{P-2005} to the process $F(T,Y_T)$. This yields
\begin{align}
   F(T,Y_T)
=&\,F(t,x)+\int_t^T\!\left(F_t(u,Y_u)+\mathbb{L}_YF(u,Y_u)\right)\,du+M_T \\
   &+\frac{1}{2}\int_t^T F_x(u,c(u)+)\,d\ell_u^{c}(Y)\nonumber\\
   =&\,F(t,x)+M_T
   +\frac{1}{2}\int_t^T F_x(u,c(u)+)\,d\ell_u^{c}(Y),\nonumber
\end{align}
where $M=\left(M_t\right)_{t\ge0}$ is a zero-mean martingale and $\ell^{c}(Y)$ denotes the local time of $Y$ at the curve $c(\cdot)$, defined by
\begin{equation}
 \ell_t^c(Y)
 =
 \lim_{\varepsilon\to0+}
 \frac{1}{2\varepsilon}
 \int_0^t
 I\!\left(\left|Y_s-c(s)\right|<\varepsilon\right)
 d\langle Y,Y\rangle_s.
\end{equation}
Recalling the terminal condition for $F(T,\cdot)$, taking expectations $\mathbb{E}_{t,x}[\cdot]$ on both sides, and applying the optional stopping theorem, we obtain
\begin{align}
    F(t,x)
    =
    \mathbb{P}_{t,x}(Y_T\le c(T))
    -\frac{1}{2}
    \int_t^T
    F_x(u,c(u)+)\,
    d\mathbb{E}_{t,x}\!\left[\ell_u^{c}(Y)\right].
\end{align}
By the occupation time formula for $Y$, the expected local time admits the representation
\begin{equation}
\mathbb{E}_{t,x}\!\left[\ell_u^{c}(Y)\right]
=
\int_t^u
\sigma^2(T-s,c(s))
\,q(s,c(s);t,x)\,ds.
\end{equation}
Hence,
\begin{align}\label{F-single}
   F(t,x)
   =&\,
   \mathbb{P}_{t,x}(Y_T\le c(T))
   \\
   &-\frac{1}{2}
   \int_t^T
   F_x(u,c(u)+)
   \sigma^2\!\left(T-u,c(u)\right)
   q(u,c(u);t,x)\,du.
   \nonumber
\end{align}
for $t\in[0,T]$. Finally, reversing time once again yields the desired decomposition of the cumulative distribution function $G(t,x)$.
\end{proof}
The representation \eqref{F-single} is not yet sufficient to solve the problem, since it involves the unknown function $f(t)$. To determine $f(t)$, we evaluate \eqref{F-single} at $x=b(t)$, use the boundary condition, and obtain the following  first-kind VIE.
\begin{theorem}
The function $f(t)$ satisfies the linear Volterra integral equation of the first kind
\begin{align}\label{volterra-single}
\mathbb{P}_{T-t,b(t)}\!\left(Y_T>b(0)\right)
=
-\frac{1}{2}
\int_0^t
f(u)\,
q(T-u,b(u);T-t,b(t))
\,du
\end{align}
for $t\in[0,T]$.
\end{theorem}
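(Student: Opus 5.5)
The plan is to evaluate the representation \eqref{G-single} of Theorem~\ref{cdf-single} on the barrier and use the boundary condition. Fix $t\in(0,T]$ and let $x\downarrow b(t)$ in \eqref{G-single}. The left-hand side satisfies $G(t,x)\to G(t,b(t)+)$. Since $b$ is continuous and $X$ has continuous paths, a process started at $b(t)$ at time $T-t$ in reversed time (equivalently, at the barrier in forward time) hits the barrier immediately, by regularity of the boundary point. Here we use $\sigma(t,b(t))\neq 0$ and $b\in C^1$, together with the law of the iterated logarithm for the martingale part. Hence $G(t,b(t)+)=G(t,b(t))=1$, i.e.\ the boundary condition holds in the limit. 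Equivalently, one can take the representation \eqref{F-single} for $F$ directly at $x=c(t)$: the change-of-variable formula of \cite{P-2005} is valid for starting points on the curve, and $F(t,c(t))=1$.

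Setting $x=b(t)$ in \eqref{G-single} and using $G(t,b(t))=1$ gives
\begin{align*}
1=\mathbb{P}_{T-t,b(t)}\left(Y_T\le b(0)\right)-\frac{1}{2}\int_0^t f(u)\,q(T-u,b(u);T-t,b(t))\,du .
\end{align*}
Moving the probability to the left and writing $1-\mathbb{P}_{T-t,b(t)}(Y_T\le b(0))=\mathbb{P}_{T-t,b(t)}(Y_T>b(0))$ yields \eqref{volterra-single}. At $t=0$ both sides vanish trivially in the sense that the integral is empty, and $\mathbb{P}_{T,b(0)}(Y_T>b(0))=0$ because no time elapses.

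The main obstacle is justifying the passage $x\downarrow b(t)$ inside the integral. The kernel $q(T-u,b(u);T-t,x)$ has a singularity of order $(t-u)^{-1/2}$ as $u\uparrow t$. I would handle this with a Gaussian-type upper bound on the transition density $q$, valid under the regularity assumptions of \cite{F-1964}, together with boundedness of $f$ on $[0,t]$ or at least a singularity of $f$ near $u=0$ of order $u^{-1/2}$. This gives an integrable majorant uniform in $x$ near $b(t)$, and dominated convergence then applies. Continuity of $x\mapsto\mathbb{P}_{T-t,x}(Y_T\le b(0))$ follows from continuity of the transition density in its starting point.
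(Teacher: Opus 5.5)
Your proposal is correct and follows the paper's own route: evaluate the representation of Theorem~\ref{cdf-single} at the barrier (equivalently, \eqref{F-single} at $x=c(T-t)=b(t)$), impose the boundary condition $G(t,b(t))=1$, and rearrange using $1-\mathbb{P}_{T-t,b(t)}(Y_T\le b(0))=\mathbb{P}_{T-t,b(t)}(Y_T>b(0))$. Your dominated-convergence argument for $x\downarrow b(t)$ and your boundary-regularity remark add rigor the paper omits; as you note yourself, the limit can be bypassed, because the change-of-variable formula of \cite{P-2005} applies directly when the starting point lies on the curve.
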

A similar result holds for the first-hitting time of the upper boundary $b(t)$ in the case $X_0<b(0)$. For completeness, we state the corresponding result, which is the counterpart of Theorem~\ref{cdf-single}. Its proof is omitted, as it follows by symmetry.

\begin{theorem}\label{cdf-single-upper}
The cumulative distribution function $G(t,x)$ of the first-hitting time of the
upper boundary $b(t)$ is given by
\begin{align}\label{G-single-upper}
G(t,x)
=
\mathbb{P}_{T-t,x}\!\left(Y_T\ge b(0)\right)
+
\frac{1}{2}
\int_0^t
f(u)\,
q(T-u,b(u);T-t,x)
\,du
\end{align}
for $t\in[0,T]$, where $f(t)=G_x(t,b(t)-)\sigma^2(t,b(t))$
is the solution to the linear Volterra equation of the first kind
\begin{align}\label{volterra-single-upper}
\mathbb{P}_{T-t,b(t)}\!\left(Y_T<b(0)\right)
=
\frac{1}{2}
\int_0^t
f(u)\,
q(T-u,b(u);T-t,b(t))
\,du
\end{align}
for $t\in[0,T]$.
\end{theorem}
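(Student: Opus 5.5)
The plan is to mirror the proof of Theorem~\ref{cdf-single}, reflecting the geometry: the region of continuation is now $x<b(t)$ and the stopping region is $x\ge b(t)$. The first step is to write the Kolmogorov problem for $G$:
\[
G_t=\mathbb{L}_X G \quad\text{for } x<b(t),
\qquad G=1 \quad\text{for } x\ge b(t),
\qquad G(0,x)=0 \quad\text{for } x<b(0).
\]
Next we pass to the time-reversed quantities $F(t,x)=G(T-t,x)$ and $c(t)=b(T-t)$. These satisfy $F_t+\mathbb{L}_Y F=0$ on $x<c(t)$, with $F=1$ on $x\ge c(t)$ and $F(T,x)=0$ for $x<c(T)$.

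We then apply the local time-space formula of \cite{P-2005} to $F(T,Y_T)$ started from $(t,x)$. Off the curve, $F$ is $C^{1,2}$ on each side: it satisfies the PDE below $c$ and is constant above it. The local time term is therefore
\[
\tfrac12\int_t^T\bigl(F_x(u,c(u)+)-F_x(u,c(u)-)\bigr)\,d\ell^c_u(Y)=-\tfrac12\int_t^T F_x(u,c(u)-)\,d\ell^c_u(Y).
\]
This sign flip, relative to the lower-barrier case where the jump was $+F_x(u,c(u)+)$, is exactly what produces the $+$ sign in \eqref{G-single-upper}.

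Taking $\mathbb{E}_{t,x}$ removes the martingale part. The terminal value satisfies $F(T,Y_T)=I(Y_T\ge c(T))$. Applying the occupation time formula
\[
\mathbb{E}_{t,x}[\ell^c_u(Y)]=\int_t^u\sigma^2(T-s,c(s))\,q(s,c(s);t,x)\,ds
\]
gives
\[
F(t,x)=\mathbb{P}_{t,x}(Y_T\ge c(T))+\tfrac12\int_t^T F_x(u,c(u)-)\,\sigma^2(T-u,c(u))\,q(u,c(u);t,x)\,du .
\]
Reversing time by substituting $t\to T-t$ and $u\to T-u$, and using $c(T)=b(0)$ and $F_x(T-u,c(T-u)-)=G_x(u,b(u)-)$, yields \eqref{G-single-upper} with $f(u)=G_x(u,b(u)-)\sigma^2(u,b(u))$.

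For the integral equation \eqref{volterra-single-upper}, we evaluate the representation at $x=b(t)$, where $G(t,b(t))=1$. The identity becomes
\[
1-\mathbb{P}_{T-t,b(t)}(Y_T\ge b(0))=\tfrac12\int_0^t f(u)\,q(T-u,b(u);T-t,b(t))\,du,
\]
and the left-hand side equals $\mathbb{P}_{T-t,b(t)}(Y_T<b(0))$.

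The main technical point is justifying this evaluation at the boundary. It requires the integral to be continuous as $x\uparrow b(t)$, even though the kernel $q(T-u,b(u);T-t,x)$ is singular like $(t-u)^{-1/2}$ as $u\uparrow t$. The plan is to handle this exactly as in the lower case, bounding $f$ near $u=t$ and using the integrable square-root singularity together with dominated convergence. A secondary point is that the conditions of \cite{P-2005} hold, namely that $F_x(\cdot,c(\cdot)-)$ is locally bounded and continuous, which rests on the $C^1$ barrier and the nondegeneracy $\sigma(t,b(t))\ne0$.
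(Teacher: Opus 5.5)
Your proposal reproduces, step for step, the argument the paper has in mind when it says the proof follows by symmetry from Theorem~\ref{cdf-single}: the sign flip of the local-time term to $-\tfrac12\int_t^T F_x(u,c(u)-)\,d\ell^c_u(Y)$ (since $F_x(u,c(u)+)=0$), the terminal value $I(Y_T\ge c(T))$, the substitution $u\mapsto T-u$ and the evaluation at $x=b(t)$ are all handled correctly. One caveat comes from the paper rather than from you: the opening Kolmogorov problem (forward in $t$, generator at time $t$, $G=1$ on $x\ge b(t)$) describes $\mathbb{P}_x(\tau_b\le t)$ only for a constant barrier and time-homogeneous coefficients---e.g.\ for decreasing $b$ and $b(t)\le x<b(0)$ the true probability of hitting by time $t$ is strictly below $1$---and in general its solution is the probability that the diffusion with coefficients $\mu(t-r,\cdot),\sigma(t-r,\cdot)$, $r\in[0,t]$, hits the reversed barrier $r\mapsto b(t-r)$, so both your argument and the paper's establish the stated formula for $\tau_b$ only in the homogeneous setting.
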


\begin{example}
\textit{Bessel process of dimension $d$.}
We assume that the evolution of $X$ is given by
\begin{equation}
dX_t
=
\frac{d-1}{2X_t}\,dt
+
dW_t,
\qquad
X_0=x.
\end{equation}
We recall that a Bessel process is a real-valued stochastic process representing the Euclidean norm (distance from the origin) of a multi-dimensional Brownian motion. The transition density of a Bessel process of dimension $d$ is known explicitly:
\begin{equation}
p_d(s,y;t,x)
=
\frac{y}{s-t}
\left(\frac{y}{x}\right)^\nu
\exp\!\left(
-\frac{x^2+y^2}{2(s-t)}
\right)
I_\nu\!\left(
\frac{xy}{s-t}
\right),
\end{equation}
for $0\le t<s$ and $x>0$, where
$\nu=\frac{d}{2}-1$, and $I_\nu(\cdot)$ denotes the modified Bessel function of the first kind of order $\nu$.
Figure~\ref{fig:BESSEL} illustrates the solution to the hitting-time problem for the upper barrier $b$ by $X$.
\end{example}
\begin{figure}[h]
    \centering
    \begin{subfigure}[t]{0.48\textwidth}
        \centering
        \includegraphics[width=\textwidth]{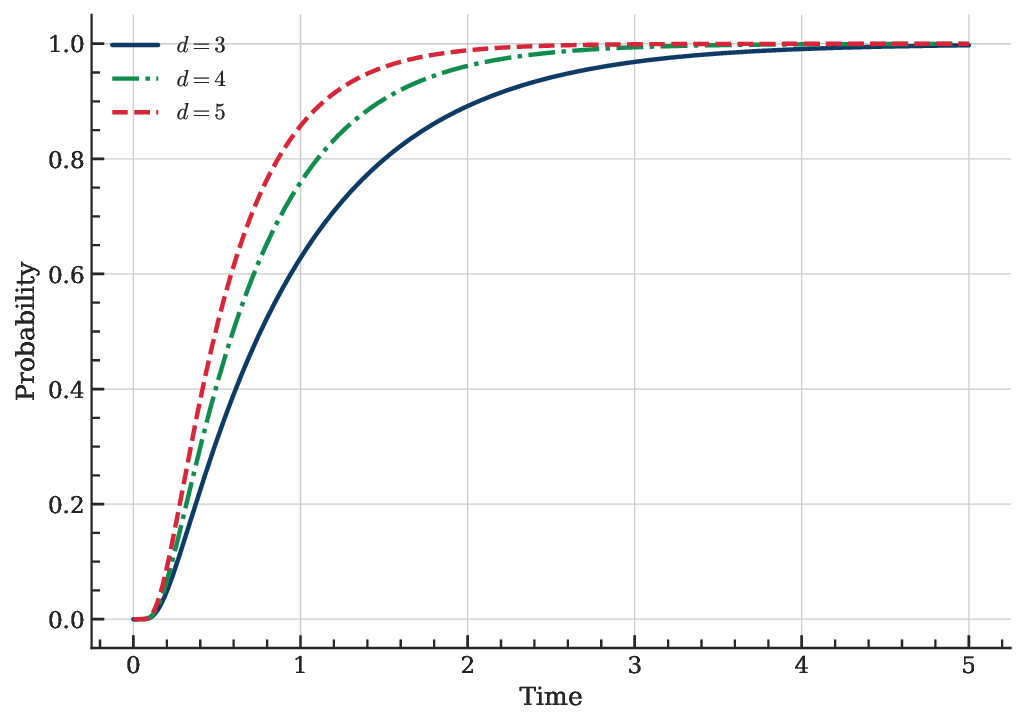}
        \caption{CDF}
        \label{fig:BESSEL-hitting-cdf}
    \end{subfigure}
    \hfill
    \begin{subfigure}[t]{0.48\textwidth}
        \centering
        \includegraphics[width=\textwidth]{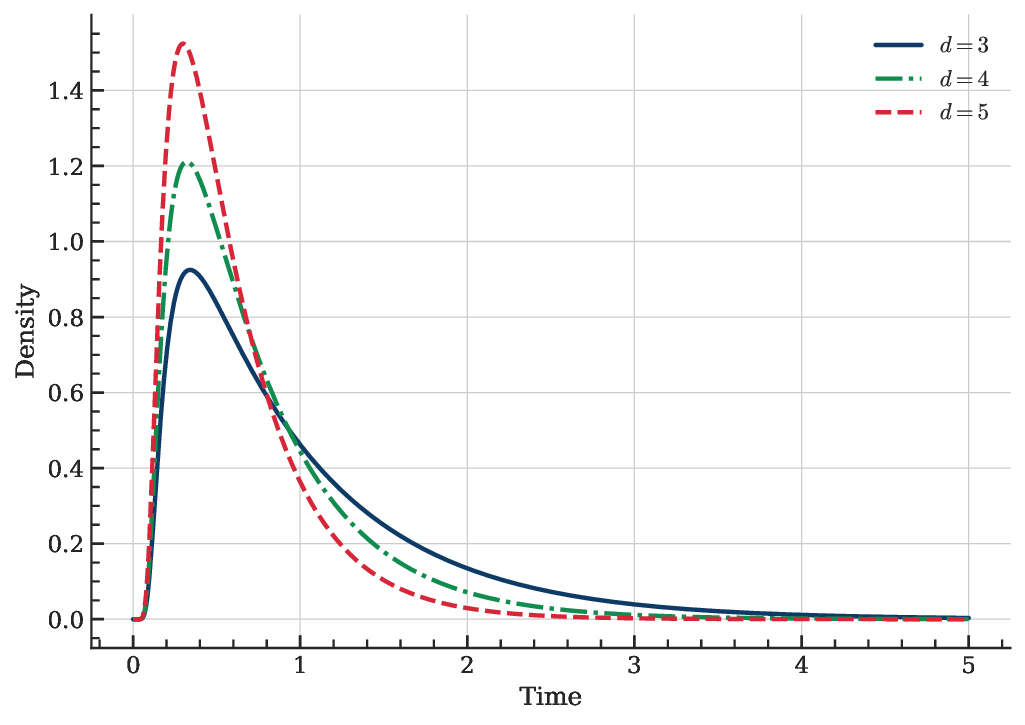}
        \caption{PDF}
        \label{fig:BESSEL-hitting-pdf}
    \end{subfigure}
    \caption{
    Bessel process:
    cumulative distribution function (left) and probability density function (right) of hitting time.
    Effect of dimension $d$.
    Parameters: $T=5$, $X_0=1$, $b=2$ (constant upper barrier). Details of the numerical scheme to solve the corresponding VIE are in Section \ref{S:properties}. The number of time steps used is $m=2^8$.}
    \label{fig:BESSEL}
\end{figure}

\section{First Hitting Time: Double Barrier}
\label{S:double_A}

In this section, we study the first-hitting time of two barriers. As above, we allow for time-dependent model parameters and barriers. We consider the formulation where it is irrelevant which of the two smooth barriers is hit first. For the process $X$, given two barriers $b_1(t)<b_2(t)$, the first-passage time is defined by
\begin{equation}
    \tau=\inf\{t>0:X_t\le b_1(t)\text{ or }X_t\ge b_2(t)\},
\end{equation}
with $b_1(0)<X_0=x<b_2(0)$.  We first consider the following boundary value problem for the cumulative hitting probability $G(t, x)=\mathbb{P}_x(\tau\le t)$
\begin{align}
& G_t(t, x) = \mathbb{L}_X G(t,x)  ,\quad b_1(t)<x<b_2(t) \\
&G(0, x)  =0, \quad b_1(\yk{0})<x<b_2(\yk{0})\\
&G(0, x)  =1, \quad x\le b_1(\yk{0}) \text{ or }  x\ge b_2(\yk{0})\\
&G(t, x)  =1,\quad x\le b_1(t) \text{ or } x\ge b_2(t).
\end{align}
Our goal is to determine $G(t,x)$. As in the single-barrier case, we introduce the time-reversed function $F(t,x):=G(T-t,x)$ and the reversed barriers $c_i(t):=b_i(T-t)$, $i=1,2$. Then $F(t,x)$ satisfies
\begin{align}
&F_t(t,x)+\mathbb{L}_YF(t,x)=0,\quad t\in[0,T),\quad c_1(t)<x<c_2(t), \\
&F(T,x)=0,\quad c_1(T)<x<c_2(T),\\
&F(T,x)=1,\quad x\le c_1(T)\text{ or }x\ge c_2(T),\\
&F(t,x)=1,\quad t\in[0,T),\quad x\le c_1(t)\text{ or }x\ge c_2(t).
\end{align}
As before, we apply the local time-space formula of \cite{P-2005} to the process $F(T,Y_T)$:
\begin{align}
   F(T,Y_T)
   =&\,F(t,x)
   +\int_t^T\!\left(F_t(u,Y_u)+\mathbb{L}_YF(u,Y_u)\right)du
   +M_T\\
   &+\frac{1}{2}\int_t^T F_x(u,c_1(u)+)\,d\ell_u^{c_1}(Y)
   -\frac{1}{2}\int_t^T F_x(u,c_2(u)-)\,d\ell_u^{c_2}(Y)\nonumber\\
   =&\,F(t,x)+M_T
   +\frac{1}{2}\int_t^T F_x(u,c_1(u)+)\,d\ell_u^{c_1}(Y)\nonumber\\
   &-\frac{1}{2}\int_t^T F_x(u,c_2(u)-)\,d\ell_u^{c_2}(Y),\nonumber
\end{align}
where $M$ is a zero-mean martingale and $\ell_t^{c_i}(Y)$, $i=1,2$, denotes the local time of $Y$ at the curve $c_i(\cdot)$.
Recalling the terminal condition for $F(T,\cdot)$, taking expectations on both sides, and applying the optional stopping theorem, we obtain
\begin{align}\label{F-double-A}
F(t,x)
=&\,\mathbb{P}_{t,x}\!\left(Y_T\le c_1(T)\text{ or }Y_T\ge c_2(T)\right) \\
&-\frac{1}{2}\int_t^T F_x(u,c_1(u)+)\,d\mathbb{E}_{t,x}\!\left[\ell_u^{c_1}(Y)\right] +\frac{1}{2}\int_t^T F_x(u,c_2(u)-)\,d\mathbb{E}_{t,x}\!\left[\ell_u^{c_2}(Y)\right]\nonumber\\
=&\,\mathbb{P}_{t,x}\!\left(Y_T\le c_1(T)\text{ or }Y_T\ge c_2(T)\right)\nonumber\\
&-\frac{1}{2}\int_t^T f_1(u)\,q(u,c_1(u);t,x)\,du + \frac{1}{2}\int_t^T f_2(u)\,q(u,c_2(u);t,x)\,du,\nonumber
\end{align}
for $t\in[0,T]$, where $f_1(t):=F_x(t,c_1(t)+)\sigma^2(T-t,c_1(t))$, $f_2(t):=F_x(t,c_2(t)-)\sigma^2(T-t,c_2(t))$ are unknown boundary derivatives of $F$, multiplied by the factor $\sigma^2$.
The representation \eqref{F-double-A} does not yet provide a complete solution, since the functions $f_1$ and $f_2$ remain unknown. To determine them, we set $x=c_i(t)$ in \eqref{F-double-A}, invoke the boundary conditions, and obtain the following system of linear Volterra integral equations of the first kind:
\begin{align}\label{VIE-double-A}
\mathbb{P}_{t,c_1(t)}\!\bigl(c_1(T)<Y_T<c_2(T)\bigr)
=&-\frac{1}{2}\int_t^T f_1(u)\,q(u,c_1(u);t,c_1(t))\,du \\
&+\frac{1}{2}\int_t^T f_2(u)\,q(u,c_2(u);t,c_1(t))\,du,\nonumber\\
\mathbb{P}_{t,c_2(t)}\!\bigl(c_1(T)<Y_T<c_2(T)\bigr)
=&-\frac{1}{2}\int_t^T f_1(u)\,q(u,c_1(u);t,c_2(t))\,du \\
&+\frac{1}{2}\int_t^T f_2(u)\,q(u,c_2(u);t,c_2(t))\,du,\nonumber
\end{align}
for $t\in[0,T]$.

We first solve the system \eqref{VIE-double-A} numerically for $f_1$ and $f_2$. Once these functions have been determined, we can compute $F(t,x)$ from \eqref{F-double-A} and hence recover $G(t,x)=F(T-t,x)$.

\begin{example}
\textit{CEV process.}
Suppose that the process $X$ follows
\begin{equation}
dX_t
=
\mu X_t\,dt
+
\sigma X_t^{\beta/2}\,dW_t,
\end{equation}
where $\mu\in\mathbb{R}$, $\sigma>0$, and $\beta>0$ are constant parameters. If $\beta=2$, then $X$ is a geometric Brownian motion. This model has been used extensively in option pricing, as it allows one to incorporate the so-called leverage effect when $\beta<2$. For $\beta\in(0,2)$, the corresponding transition density is given in \cite{S-1989} by
\begin{equation}\label{cev-density}
p_{\mathrm{CEV}}(s,y;t,x)
=
(2-\beta)\,
\lambda^{\frac{1}{2-\beta}}
(uv^{1-2\beta})^{\frac{1}{4-2\beta}}
e^{-u-v}
I_{\frac{1}{2-\beta}}\!\left(2\sqrt{uv}\right),
\end{equation}
where
\begin{align}
\lambda
&=
\frac{2\mu}
{\sigma^2(2-\beta)\bigl(e^{\mu(2-\beta)(s-t)}-1\bigr)},
\\
u
&=
\lambda x^{2-\beta}e^{\mu(2-\beta)(s-t)},
\qquad
v
=
\lambda y^{2-\beta}.
\end{align}
Figure~\ref{fig:CEV} illustrates the solution for this example.
\end{example}

\begin{figure}[h]
    \centering
    \begin{subfigure}[t]{0.48\textwidth}
        \centering
        \includegraphics[width=\textwidth]{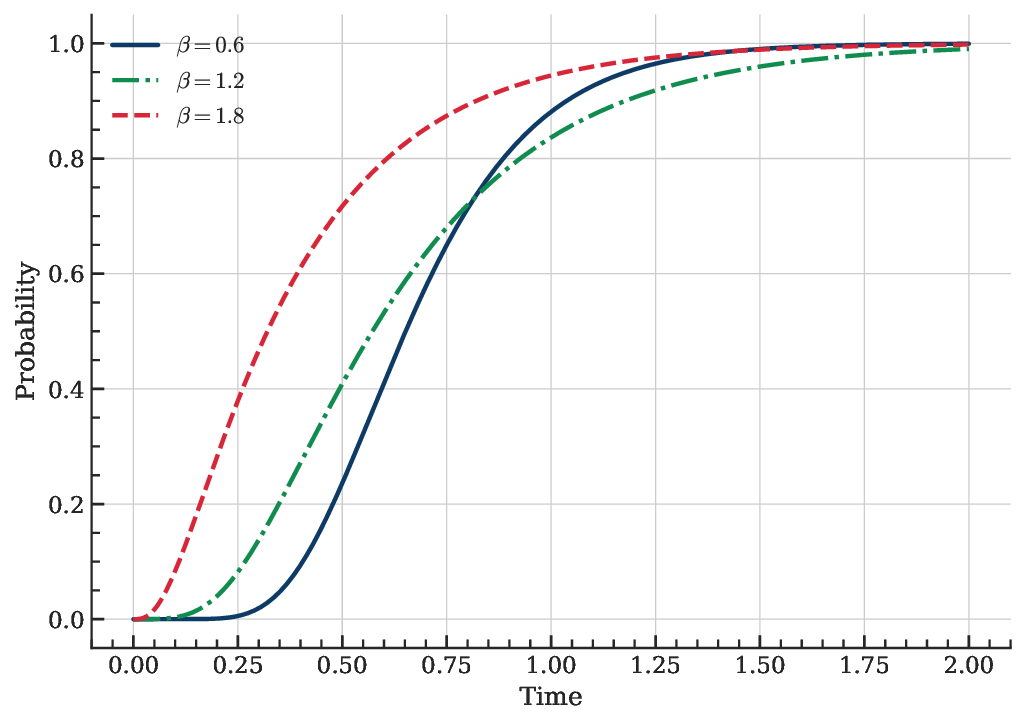}
        \caption{CDF}
        \label{fig:CEV-hitting-cdf}
    \end{subfigure}
    \hfill
    \begin{subfigure}[t]{0.48\textwidth}
        \centering
        \includegraphics[width=\textwidth]{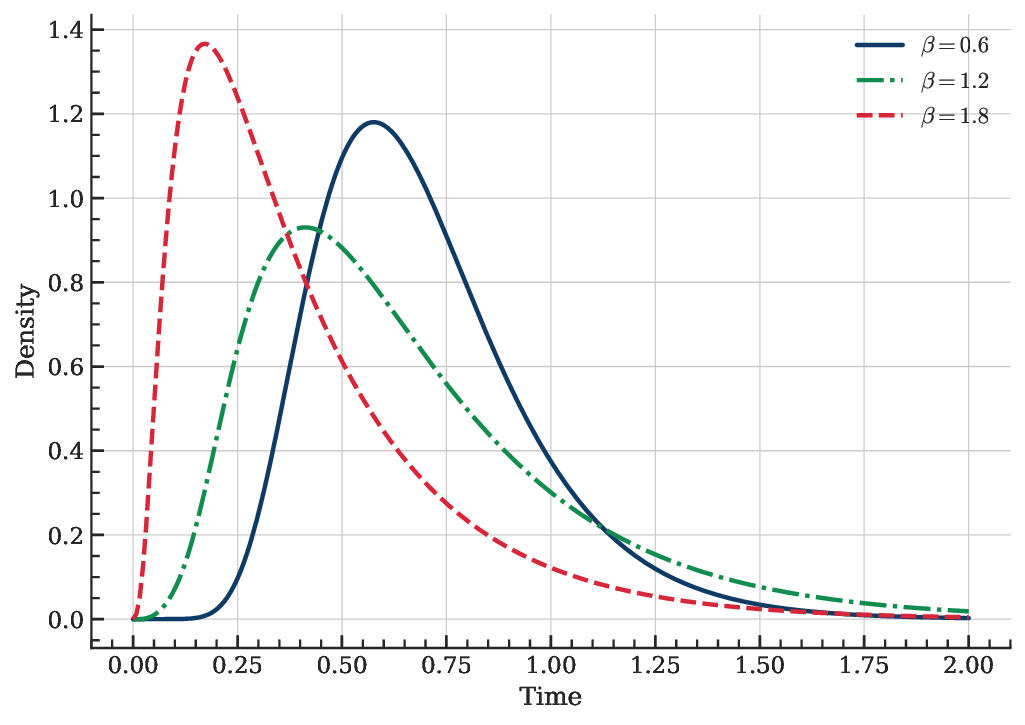}
        \caption{PDF}
        \label{fig:CEV-hitting-pdf}
    \end{subfigure}
    \caption{
    CEV process:
    cumulative distribution function (left) and probability density function (right) of the hitting time of two constant barriers $b_1<b_2$ for Problem A.
    Effect of $\beta$. Parameters:  $T=2$, $X_0=5$, $b_1=4$, $b_2=7$, $\mu = 0.5$, $\sigma = 0.5$. The number of time steps used is $m=2^8$.}
    \label{fig:CEV}
    
\end{figure}

\begin{remark}

The same methodology also applies to other first-passage problems involving two barriers. As an example, consider the probability that the lower barrier is reached before both the upper barrier and time $t$,
\begin{equation}
G(t,x)=\mathbb{P}_x(\tau_1\le \tau_2\wedge t),
\end{equation}
which satisfies the backward problem
\begin{align}
&G_t(t,x)=\mathbb{L}_XG(t,x),\qquad b_1(t)<x<b_2(t),\\
&G(0,x)=I(x\le b_1(0)),\\
&G(t,x)=1,\qquad x\le b_1(t),\\
&G(t,x)=0,\qquad x\ge b_2(t).
\end{align}
Compared with the previous double-barrier formulation, only the initial and boundary conditions are modified. Therefore, applying the same procedure, we again obtain a coupled system of VIEs of the first kind for the unknown boundary fluxes $f_1$ and $f_2$. By combining the cases above, one can also solve for other variants of the FPT, e.g.,
$
\mathbb{P}(\tau_1<t\,|\,\tau_1<\tau_2)
=
\frac{\mathbb{P}(\tau_1\le\tau_2\wedge t)}
{\mathbb{P}(\tau_1<\tau_2)}.
$

\end{remark}

\section{Theoretical Results and Numerical Analysis}
\label{S:properties}

In this section, we provide theoretical characteristics for the obtained equations and give details on the numerical implementation. To begin, we establish the singularity property. To do so, we recall the short-time asymptotic behavior of the transition density, which is asymptotically Gaussian as the time increment tends to zero (see, e.g., \cite{F-1964}).
\begin{lemma}\label{lem-1}
For any function $c:[0,T]\to\mathbb{R}$ in $C^1$, the transition density function $q(s,y;t,x)$ with $0\le t<s\le T$ and $x,y\in\mathbb{R}$ has the following asymptotic behavior:
\begin{equation}
q(s,c(s);t,c(t))
\sim
\frac{1}{\sqrt{2\pi}\,\sigma(t,c(t))}(s-t)^{-1/2}
\quad \text{as } s\to t+,
\end{equation}
and $q(s,y;t,x)\to0$ for $x\neq y$ as $s\to t+$.
\end{lemma}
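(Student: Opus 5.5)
The plan is to use the classical parametrix (Levi) construction of the fundamental solution from Friedman's Chapter 1 \cite{F-1964}. Under the standing assumptions (Hölder coefficients, uniform ellipticity), it gives
\begin{equation*}
q(s,y;t,x)=Z(s,y;t,x)+\int_t^s\!\int_{\mathbb{R}} Z(s,y;r,z)\,\Phi(r,z;t,x)\,dz\,dr .
\end{equation*}
Here $Z$ is the Gaussian kernel with frozen coefficients,
\begin{equation*}
Z(s,y;t,x)=\frac{1}{\sqrt{2\pi(s-t)}\,\sigma_*}\exp\!\Big(-\frac{(y-x)^2}{2\sigma_*^2(s-t)}\Big),
\end{equation*}
and $\sigma_*$ is the diffusion coefficient of $Y$ frozen at the point $(t,x)$. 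The correction term obeys the standard bound
\begin{equation*}
\Big|q-Z\Big|\le C\,(s-t)^{-1/2+\alpha/2}\exp\!\big(-c(y-x)^2/(s-t)\big)
\end{equation*}
for some $\alpha\in(0,1)$, uniformly on $[0,T]$. I take this estimate as given from \cite{F-1964}; it is the only deep input.

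For the first claim, set $x=c(t)$ and $y=c(s)$. Since $c\in C^1$, the mean value theorem gives $|c(s)-c(t)|\le \|c'\|(s-t)$. The Gaussian exponent is therefore $-\mathcal{O}((s-t)^2)/(s-t)=\mathcal{O}(s-t)\to0$, so the exponential factor tends to $1$. Hence
\begin{equation*}
Z(s,c(s);t,c(t))=\frac{1}{\sqrt{2\pi}\,\sigma_*}(s-t)^{-1/2}\,(1+o(1)).
\end{equation*}
The correction is $\mathcal{O}((s-t)^{-1/2+\alpha/2})=o((s-t)^{-1/2})$. Dividing by $(s-t)^{-1/2}$ yields the stated equivalence. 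The diffusion coefficient at $(t,c(t))$ is nonzero by the standing assumption $\sigma(t,b(t))\neq0$, so the ratio is well defined. One should also note that the $\sigma$ appearing in the statement is the coefficient of the relevant process evaluated at $(t,c(t))$, and that $\sigma>0$ is taken by the usual sign convention. If needed, continuity of $\sigma$ lets one replace the evaluation point $t$ by $s$ without changing the limit.

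For the second claim, fix $x\neq y$ and let $\delta=|y-x|>0$. Both $Z$ and the correction are bounded by $C(s-t)^{-1/2}e^{-c\delta^2/(s-t)}$, which tends to $0$ as $s\to t+$ because exponential decay dominates any power. This gives $q(s,y;t,x)\to0$.

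The main obstacle is justifying the uniform Gaussian upper bound on the parametrix remainder. It is also where uniform ellipticity and Hölder continuity are genuinely needed. Without uniform ellipticity (e.g.\ CEV or Bessel near $0$), I would localize: the barrier stays in a compact set on which $\sigma$ is bounded away from zero, and short-time asymptotics are local (e.g.\ via the Varadhan/Molchanov localization principle). This reduces the problem to the uniformly elliptic case.
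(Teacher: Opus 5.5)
Your proposal is correct and is the paper's argument made explicit: the paper gives no proof beyond citing the asymptotically Gaussian short-time behavior of the fundamental solution in \cite{F-1964}, and your parametrix decomposition supplies exactly that content. It combines the $\mathcal{O}\bigl((s-t)^{-1/2+\alpha/2}\bigr)$ remainder bound with $(c(s)-c(t))^2/(s-t)\le\|c'\|^2(s-t)\to0$, and your side remarks are also right: the coefficient in the limit must be that of $Y$ (in the paper's notation $\sigma(T-t,c(t))$), and freezing coefficients at $(t,x)$ rather than at the pole $(s,y)$ is immaterial by continuity.
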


\begin{example}
Compute the limit $\lim_{s\to t+}(s-t)^{1/2} p(s,x;t,x)$ for the CEV process. Recall the asymptotic behavior of the modified Bessel function $I_\nu(\cdot)$:
\begin{equation}
I_\nu(z)
\sim
\frac{e^{z}}{\sqrt{2\pi z}},
\qquad z\to\infty.
\end{equation}
Setting $y=x$ in \eqref{cev-density}, we obtain
\begin{equation}
I_q(2\sqrt{uv})
\sim
\frac{e^{2k x^{2-\beta}}}{\sqrt{4\pi k x^{2-\beta}}},
\qquad
e^{-u-v}
\sim
e^{-2k x^{2-\beta}}.
\end{equation}
Furthermore,
\begin{equation}
(2-\beta)\,k^{\frac{1}{2-\beta}}
(uv^{1-2\beta})^{\frac{1}{4-2\beta}}
\sim
(2-\beta)\,k\,x^{1-\beta},
\qquad
k
\sim
\frac{2}{\sigma^2(2-\beta)^2(s-t)}.
\end{equation}
Combining the above asymptotics yields
\begin{align}
\lim_{s\to t+}(s-t)^{1/2} p(s,x;t,x)
=\lim_{s\to t+}
(s-t)^{1/2}
\frac{(2-\beta)\sqrt{k}}
{\sqrt{4\pi}\,x^{\beta/2}}
=\frac{1}{\sqrt{2\pi}\,\sigma x^{\beta/2}}.
\end{align}
This illustrates the result of Lemma~\ref{lem-1} in the present setting.
\end{example}
As a consequence, we observe that the VIE \eqref{volterra-single} of the first kind exhibits a weak singularity of order $1/2$. It is well known that this type of equation is ill-posed (paragraph 1.4.2 in \cite{B-2017}) in the sense of Hadamard in $C[0,T]$, even when a unique solution exists. Therefore, our goal is to establish existence and uniqueness in an appropriate function space where the problem is well-posed. We start with the single-barrier case and rewrite equation \eqref{volterra-single} in the general form
\begin{equation}\label{VIE-1-standard}
g(t)
=
\int_0^t
\frac{f(u)\,k(t,u)}
     {(t-u)^{1/2}}
\,du,
\qquad
0\le t\le T.
\end{equation}
Here, the left-hand side is
$g(t):=\mathbb{P}_{T-t,b(t)}(Y_T>b(0))$,
and the non-singular kernel is
$k(t,u):= 1/2(t-u)^{1/2}q(T-u,b(u);T-t,b(t))$.
Now, we refer to Theorem 5.4 of \cite{L-1985}, or to the original result in \cite{A-1974}, for the existence and uniqueness conditions. The core requirements are the smoothness of the data $g(t)$ and $k(t,u)$, which are satisfied under our initial assumptions, together with the condition $k(t,t)\neq0$. The latter follows directly from Lemma~\ref{lem-1}. If these conditions are fulfilled, then the solution belongs to a weighted Sobolev space, defined by
$\mathcal{C}_\gamma[0,T]
:=
\left\{
f:[0,T]\to\mathbb{R}
:
t^\gamma f(t)\in C[0,T]
\right\}$, $
\gamma>-1$,
with the intrinsic norm
\begin{equation}
\|f\|_\gamma
:=
\sup_{0\le t\le T}
\bigl|t^\gamma f(t)\bigr|.
\end{equation}
The map
$\mathcal J_\gamma:C[0,T]\to\mathcal{C}_\gamma[0,T]$,
defined by
$\mathcal J_\gamma(f)(t)=t^\gamma f(t)$,
is a linear isometric isomorphism.
Hence, $\mathcal{C}_\gamma[0,T]$ is a separable Banach space. The main obstacle in numerical approaches is that $g(0)\neq0$; therefore, the solution $f(t)$ blows up at the origin, and standard methods such as those in \cite{L-1985} or \cite{W-1972} are not directly applicable. To overcome this difficulty, we employ a regularization technique. The recent paper by \cite{WNG-2023} presents a finite-term psi-series expansion together with numerical treatments based on Chebyshev collocation methods, showing good numerical performance. In contrast, we propose a simpler approach that preserves efficiency and can be generalized to systems of VIEs. We introduce the regularization function
$\tilde f(t)=t^\alpha f(t)$,
$\alpha\in[1/2,1)$,
which remains bounded at $0$. Thus, equation \eqref{VIE-1-standard} becomes
\begin{equation}\label{VIE-1-standard1}
g(t)
=
\int_0^t
\frac{\tilde f(u)\,k(t,u)}
     {u^\alpha (t-u)^{1/2}}
\,du,
\qquad
0\le t\le T.
\end{equation}
Next, to solve the equation, we utilize the product integration approach described in \cite{L-1985}. We introduce the uniform grid $\{t_i=i\Delta t,\; i=0,1,\dots,m\}$
with $\Delta t=T/m$ for some $m$.  The approximate solution at the grid point $t_i$ will be denoted by $\hat f_i$, $i=0,\dots,m$. Also, let $g_i=g(t_i)$ and $k_{i,j}=k(t_i,t_j)$.
We approximate $\tilde f(u)k(t_i,u)$ by a piecewise linear function on each subinterval:
\begin{equation}\label{linear-interpolation}
Q_j(t_i,u)
=
\frac{
(t_{j+1}-u)\,k_{i,j}\hat f_j
+
(u-t_j)\,k_{i,j+1}\hat f_{j+1}
}
{\Delta t},
\quad
t_j<u\le t_{j+1},
\quad
j=0,1,\dots,i-1.
\end{equation}
As a result, the integral equation \eqref{VIE-1-standard1} is approximated by the following lower-triangular system for $\hat f_i$:
\begin{equation}\label{VIE-discrete}
\Delta t^{\alpha-1/2}g_i
=
k_{i,0}W_{i,0}^{(1)}\hat f_0
+
\sum_{j=1}^{i-1}
k_{i,j}
\bigl(
W_{i,j}^{(1)}
+
W_{i,j-1}^{(2)}
\bigr)
\hat f_j
+
k_{i,i}W_{i,i-1}^{(2)}\hat f_i,
\quad
i=1,\dots,m.
\end{equation}
where $W_{i,j}^{(1)}$ and $W_{i,j}^{(2)}$ are precomputed weights
\begin{align}
\label{E:W1}
W_{i,j}^{(1)}
&=
\int_j^{j+1}
(j+1-u)\,u^{-\alpha}(i-u)^{-1/2}
\,du,
\\
\label{E:W2}
W_{i,j}^{(2)}
&=
\int_j^{j+1}
(u-j)\,u^{-\alpha}(i-u)^{-1/2}
\,du
\end{align}
for $j\le i$. The starting value $\hat f_0$ is determined from the asymptotic behavior of the integral equation as $t\to0+$:
\begin{align}
\hat f_0
=
\begin{cases}
\dfrac{g(0)}{\pi\,k(0,0)}, & \alpha=1/2, \\[1ex]
0, & \alpha\in(1/2,1).
\end{cases}
\end{align}
At this point, given the numerical solution $\hat{f}$, we can compute the probability function $G(t,x)$ by performing the numerical integration in \eqref{G-single} using the same quadrature formula on the grid associated with the discrete solution $\hat f_i$. Finally, the density function $g(t)$ is calculated by applying an explicit finite difference scheme. 

The next theorem establishes a convergence order of $3/2$ for the proposed numerical scheme. This differs from the classical analysis in \cite{W-1972}, where second-order convergence is obtained because $g(0)=0$ and no regularization is needed. Nevertheless, the numerical experiment reported below indicates that the second-order convergence is preserved in practice, suggesting that the reduced theoretical order is an artifact of the proof technique. The sketch of the proof is given in Appendix \ref{S:APP}.

\begin{theorem}
\label{T:error}
Assume the kernel $k(t,u)$ and the solution $\tilde f(t)$ of \eqref{VIE-1-standard1} are 
\begin{enumerate}
    \item twice continuously differentiable;
    \item  derivatives $\tilde f^{(3)}$, $\partial^3 k/\partial u^3$, and $\partial^3 k/\partial t \partial u^2$ are bounded on the domain $0 \le u \le t \le T$.
\end{enumerate}
Then, the error of the product trapezoidal rule \label{VIE-discrete} for VIE \eqref{VIE-1-standard1} satisfies
\begin{equation}
\max_{0 \le i \le m} |\tilde f(t_i) - \hat f_i| = \mathcal O\left(\Delta t^{3/2} \right).
\end{equation}
\end{theorem}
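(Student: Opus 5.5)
We follow the classical consistency–stability argument for product integration of Abel-type first-kind equations (\cite{W-1972}, Chapter 9 of \cite{L-1985}), adapted to the extra weight $u^{-\alpha}$.

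\emph{Consistency.} Write $e_i=\tilde f(t_i)-\hat f_i$. Substituting the exact solution into the scheme, \eqref{VIE-discrete} holds for $\tilde f(t_i)$ up to a local residual
\[
\rho_i=\int_0^{t_i}\frac{\tilde f(u)k(t_i,u)-Q_j^{\rm ex}(t_i,u)}{u^{\alpha}(t_i-u)^{1/2}}\,du,
\]
where $Q^{\rm ex}$ is the piecewise linear interpolant of $\tilde f(\cdot)k(t_i,\cdot)$. After the change of variables $u=\Delta t\, s$, this residual appears in the scaled form $\Delta t^{\alpha-1/2}g_i$ that the scheme uses. By assumptions 1--2, the interpolation error on $[t_j,t_{j+1}]$ is $\frac12(u-t_j)(u-t_{j+1})\partial_u^2(\tilde f k)(t_i,\xi)$.

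The naive bound $O(\Delta t^2)\int_0^{t_i}u^{-\alpha}(t_i-u)^{-1/2}du$ fails to give the full $O(\Delta t^2)$: the singular weights at both endpoints make the first and last cells contribute only $O(\Delta t^{5/2-\alpha})$ and $O(\Delta t^{3/2})$ (unscaled). I would therefore split the integral into the first cell, the last cell, and the interior. On the interior I would expand the interpolation error one order further, using the bounded third derivatives $\tilde f^{(3)}$, $\partial_u^3k$ and $\partial_t\partial_u^2k$. The leading $\frac12(u-t_j)(u-t_{j+1})\partial_u^2(\tilde fk)(t_i,t_j)$ terms then sum to a smooth function of $t_i$ times $\Delta t^2$, plus an $O(\Delta t^{5/2})$ remainder. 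Granting this, the residual is $\rho_i=\Delta t^2 c(t_i)+O(\Delta t^{3/2}\cdot\Delta t^{1/2})$ with $c$ smooth, apart from the endpoint cells, which contribute $O(\Delta t^{3/2})$ relative to the natural normalization $\Delta t^{1/2}$ of the weights.

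\emph{Stability.} Subtracting gives the lower-triangular error system
\[
\sum_{j\le i}k_{i,j}\,\omega_{i,j}\,e_j=\Delta t^{\alpha-1/2}\rho_i,
\]
with $\omega_{i,j}=W^{(1)}_{i,j}+W^{(2)}_{i,j-1}$ and the obvious modification at $j=0,i$. I would freeze the kernel to $k(t_i,t_i)\neq0$, which is nonzero by Lemma~\ref{lem-1}, and treat $k_{i,j}-k_{i,i}=O((i-j)\Delta t)$ as a perturbation. This reduces stability to the convolution-type matrix of the weights $\omega_{i,j}$. For $j\gg1$ we have $u^{-\alpha}\approx j^{-\alpha}$, so the matrix behaves like the Abel-trapezoidal weights $\int(i-u)^{-1/2}$ scaled by $j^{-\alpha}$.

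The standard tool is to apply the discrete Abel inverse: convolve both sides with the coefficients $a_n$ of $(1-z)^{1/2}$-type generating functions. This converts the first-kind system into a second-kind discrete system $e_i=\sum_{j<i}\beta_{i,j}e_j+r_i$ with $\sum_j|\beta_{i,j}|$ bounded and $|r_i|\lesssim\Delta t^{-1/2}\cdot$(a differenced residual). A discrete Gronwall inequality then gives $\max|e_i|\lesssim\Delta t^{-1/2}\max|\delta\rho|$.

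The $\Delta t^{-1/2}$ loss from inverting an order-$1/2$ operator is exactly offset by the smoothness of the $\Delta t^2 c(t_i)$ part, whose differences are $O(\Delta t^{3})$. The nonsmooth endpoint contributions of size $O(\Delta t^{2})$ yield $O(\Delta t^{3/2})$, which is the claimed rate. The starting value is handled as in the paper: for $\alpha\in(1/2,1)$ the exact $\tilde f(0)=0=\hat f_0$, and for $\alpha=1/2$ the formula for $\hat f_0$ matches the exact limit, so $e_0=0$.

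\emph{Main obstacle.} The hard step is stability. The weight $u^{-\alpha}$ destroys the Toeplitz structure, so the generating-function (discrete Abel) inversion of \cite{W-1972} applies only approximately. I would first try to show that $\omega_{i,j}=j^{-\alpha}\omega^{0}_{i-j}(1+O(1/j))$, where $\omega^0$ are the pure Abel trapezoid weights. The factor $j^{-\alpha}$ can then be absorbed as a diagonal scaling, exactly compensated because the unknown is $\tilde f$ rather than $f$. The $O(1/j)$ corrections and the first few indices $j\le j_0$ would be treated as a bounded perturbation controlled by Gronwall. If summing the $O(1/j)$ errors produces a logarithm, I would accept a $\log$ factor locally and absorb it using the slack between $\Delta t^{2}$ and $\Delta t^{3/2}$. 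This slack is also consistent with the paper's remark that the $3/2$ order is likely an artifact of the proof technique.
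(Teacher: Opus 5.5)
\textbf{The gap is the stability step, and the mechanism you sketch does not close it.} First, a second-kind recursion $e_i=\sum_{j<i}\beta_{i,j}e_j+r_i$ in which $\sum_j|\beta_{i,j}|$ is merely bounded gives no uniform bound via discrete Gronwall. For instance, $\beta_{i,i-1}=B>1$ gives growth like $B^{i}$ over $m=T/\Delta t$ steps. You need either $|\beta_{i,j}|=\mathcal O(\Delta t)$ or a genuine contraction. Abel inversion can give $\mathcal O(\Delta t)$ coefficients for the smooth kernel perturbation $k_{i,j}/k_{i,i}-1$. It does not do so for the corrections in $\omega_{i,j}=j^{-\alpha}\omega^0_{i-j}(1+\mathcal O(1/j))$, which have relative size $\mathcal O(1)$ for small $j$. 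So \emph{treat $j\le j_0$ by Gronwall and absorb a log} is exactly what would have to be proved. Even the Toeplitz part needs the zero-free property of the generating function of the trapezoidal Abel weights, as in \cite{E-1981}. That is the semicirculant structure the paper explicitly says is lost here.

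Second, your stability inequality $\max_i|e_i|\lesssim\Delta t^{-1/2}\max_i|\rho_i-\rho_{i-1}|$ is false. Take $\alpha=1/2$ and $k\equiv1$. The product trapezoidal rule is exact on linear functions, and $\int_0^t \frac{2u/\pi}{\sqrt{u(t-u)}}\,du=t$. Hence the error system with residual $\rho_i=t_i$ is solved exactly by $e_i=2t_i/\pi$, which is of order one, while your bound gives $\Delta t^{1/2}$. Measured against differences, the loss is at least $\Delta t^{-1}$, so a difference-based estimate turns your $\mathcal O(\Delta t^2)$ endpoint pieces into only $\mathcal O(\Delta t)$.

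The natural loss is $\Delta t^{-1/2}$ against $\max_i|\rho_i|$, and with that no splitting is needed. Since $\int_0^{t_i}u^{-1/2}(t_i-u)^{-1/2}du=\pi$, the residual is already uniformly $\mathcal O(\Delta t^2)$. Your last-cell estimate is too pessimistic: that cell contributes $\mathcal O(\Delta t^{5/2}t_i^{-\alpha})$, not $\mathcal O(\Delta t^{3/2})$. But an $\ell^\infty$ stability bound of that form for the full non-Toeplitz matrix is precisely what your proposal leaves unproved.

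\textbf{What the paper does instead.} It never inverts an Abel-type Toeplitz matrix. It subtracts the error equation at level $i$ from the one at level $i+1$ and divides by the diagonal weight $W^{(2)}_{i+1,i}$. This gives $\varepsilon_{i+1}=\sum_{j\le i}c_{i,j}\varepsilon_j+b_i$ with all $c_{i,j}>0$. The deficit $\varrho_i=1-\sum_j c_{i,j}$ is computed in closed form, because $W^{(2)}_{i+1,i}\varrho_i=I_1(i)+I_2(i)$ reduces to $\int_1^k u^{-1/2}(k-u)^{-1/2}du=\pi-2\arcsin(k^{-1/2})$ and an explicit $\arctan$ integral. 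The result is $0<\varrho_i\asymp i^{-1}$. The Kantorovich--Krylov lemma then gives $|\varepsilon_i|\le K$ as soon as $|b_i|\le K\varrho_i$.

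Because the contraction is only marginal, the burden shifts to consistency: the differenced residual must decay, $|S_{i+1}-S_i|=\mathcal O(\Delta t^{3/2}i^{-3/2})$. The paper obtains this from an Abel summation of the interpolation remainders combined with $\Delta t\le T/i$, and that is where the exponent $3/2$ comes from. This handles the weight $u^{-1/2}$ directly rather than perturbatively. Note that it is carried out only for $k\equiv1$ and $\alpha=1/2$, with general $k$ deferred to \cite{WA-1971}. Your plan aims at general $\alpha$ and $k$, but without a working stability step it does not yet cover even that special case.
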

Now let us move on to the case of two barriers and the corresponding systems. The theoretical results from the one-dimensional case naturally extend to the multi-dimensional setting; see \cite{A-1974}. As before, smoothness conditions on the parameters are required for the existence of a unique solution. We encounter the same numerical issue of an unbounded solution near $0$ for both time-reversed functions $f_1$ and $f_2$. To regularize the functions, we introduce the transformed functions
$\tilde f_i(t)=(T-t)^\alpha f_i(T-t)$,
ensuring that they remain bounded at the origin, $i=1,2$. The modified system is then treated using the product integration method. By a change of variables, it can be rewritten in the following general form:
\begin{align}
g_1(t)
&=
-\int_0^t
\frac{\tilde f_1(u)\,k_{11}(t,u)}
     {u^\alpha(t-u)^{1/2}}
\,du
+
\int_0^t
\frac{\tilde f_2(u)\,k_{12}(t,u)}
     {u^\alpha(t-u)^{1/2}}
\,du,
\\
g_2(t)
&=
-\int_0^t
\frac{\tilde f_1(u)\,k_{21}(t,u)}
     {u^\alpha(t-u)^{1/2}}
\,du
+
\int_0^t
\frac{\tilde f_2(u)\,k_{22}(t,u)}
     {u^\alpha(t-u)^{1/2}}
\,du.
\end{align}
Here, $g_l(t)$ denote the left-hand sides and $k_{lp}(t,s)$ are the kernels determined from \eqref{VIE-double-A}, for $l,p=1,2$. As before, we use a uniform time grid $\{t_i\}_{i=0}^m$. Functions indexed by superscripts $i,j$ without arguments denote their discrete values at the corresponding grid points $t_i$ and $t_j$. The approximate solutions on the grid are denoted by $\hat f_1^j$ and $\hat f_2^j$. For simplicity, we replace the integrands $\tilde f_p(u)k_{lp}(t,u)$ by the right-endpoint piecewise constant approximations
\begin{equation}
\hat f_p^{j+1}k_{lp}^{i,j+1},
\quad
t_j<u\le t_{j+1},
\quad
j=0,1,\ldots,i-1.
\end{equation}
This yields the following block lower-triangular system for the approximations $\hat f_1^j$ and $\hat f_2^j$:
\begin{align}
\begin{cases}
-\displaystyle\sum_{j=1}^{i}
\hat f_1^j\,k_{11}^{i,j}W_{i,j}
+
\displaystyle\sum_{j=1}^{i}
\hat f_2^j\,k_{12}^{i,j}W_{i,j}
=
g_1^i,
\\[1ex]
-\displaystyle\sum_{j=1}^{i}
\hat f_1^j\,k_{21}^{i,j}W_{i,j}
+
\displaystyle\sum_{j=1}^{i}
\hat f_2^j\,k_{22}^{i,j}W_{i,j}
=
g_2^i,
\end{cases}
\qquad
i=1,\ldots,m.
\end{align}
Here, $W_{i,j}$ is expressed in terms of the incomplete beta function
\[
B_z(a,b)
=
\int_0^z t^{a-1}(1-t)^{b-1}\,dt,
\]
where $a>0$, $b>0$, and $0\le z\le1$. More precisely,
\begin{equation}
W_{i,j}
=
t_i^{1/2-\alpha}
\left(
B_{\frac{j}{i}}\!\left(1-\alpha,\tfrac12\right)
-
B_{\frac{j-1}{i}}\!\left(1-\alpha,\tfrac12\right)
\right),
\end{equation}
for $j\le i$. The resulting system can be expressed in the compact matrix form
\begin{equation}
\begin{pmatrix}
g_1\\
g_2
\end{pmatrix}
=
\begin{pmatrix}
A_{11} & A_{12}\\
A_{21} & A_{22}
\end{pmatrix}
\begin{pmatrix}
\hat f_1\\
\hat f_2
\end{pmatrix}.
\end{equation}
Here,
$g_p=(g_p^1,\ldots,g_p^m)^T$ and
$\hat f_p=(\hat f_p^1,\ldots,\hat f_p^m)^T$
are vectors, while $A_{lp}$, $l,p=1,2$, are lower triangular matrices. Once the functions $f_1(t)$ and $f_2(t)$ have been computed numerically, the probability distribution $G(t,x)$ and probability density $g(t,x)$ can be obtained, similarly to the single-barrier case, by applying quadrature rules and finite-difference approximations.

\begin{example}\label{OU-VIE}
Let us consider the following equation with a known solution $f(t)$, obtained as the derivative with respect to $x$ of the hitting-time probability for an Ornstein--Uhlenbeck process at the barrier $b=0$ with certian parameters:
\begin{equation}
-\pi^{-1/2}
=
\int_0^t
f(u)
\left(
1-e^{-2(t-u)}
\right)^{-1/2}
\,du,
\qquad
f(t)
=
-\frac{2e^{-t/2}}
{\sqrt{2\pi\sinh(t)}},
\end{equation}
for $t\ge0$. Figure~\ref{fig:Error-converg} illustrates the second-order convergence of the numerical solution to the exact solution. The errors measured in the $l_\infty$ and $l_2$ norms exhibit the expected slope and closely match the reference second-order convergence rate indicated by the red dashed line.
\end{example}
\begin{figure}[h]
    \centering
    \begin{subfigure}[t]{0.48\textwidth}
        \centering
        \includegraphics[width=\textwidth]{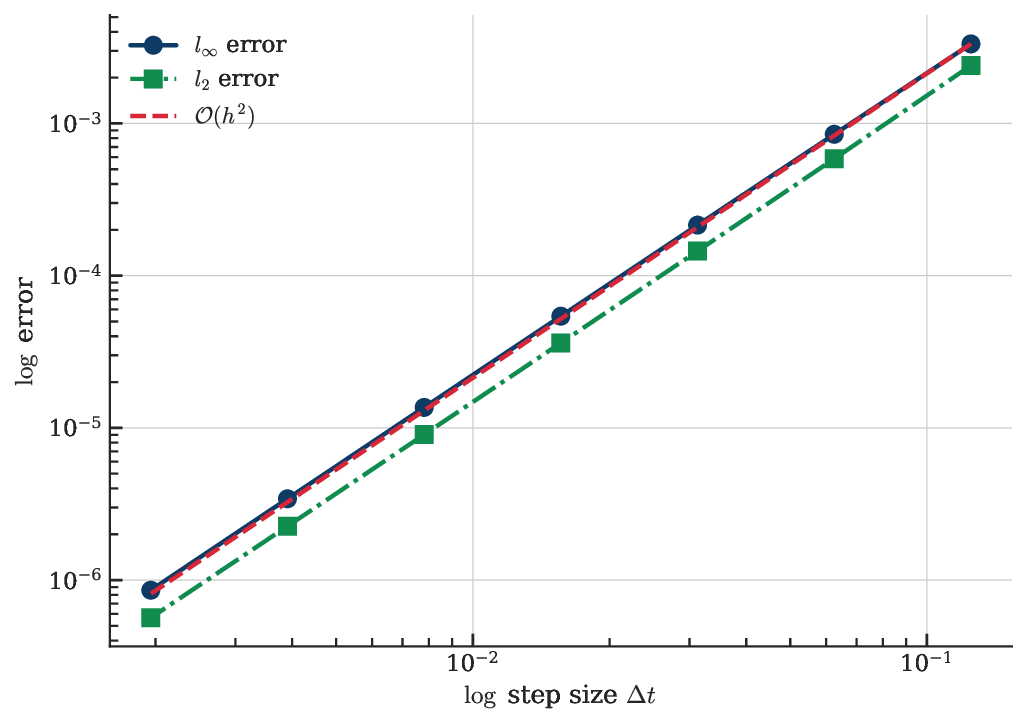}
        \caption{log-log plot}
    \end{subfigure}
    \hfill
    \begin{subfigure}[t]{0.48\textwidth}
        \centering
        \includegraphics[width=\textwidth]{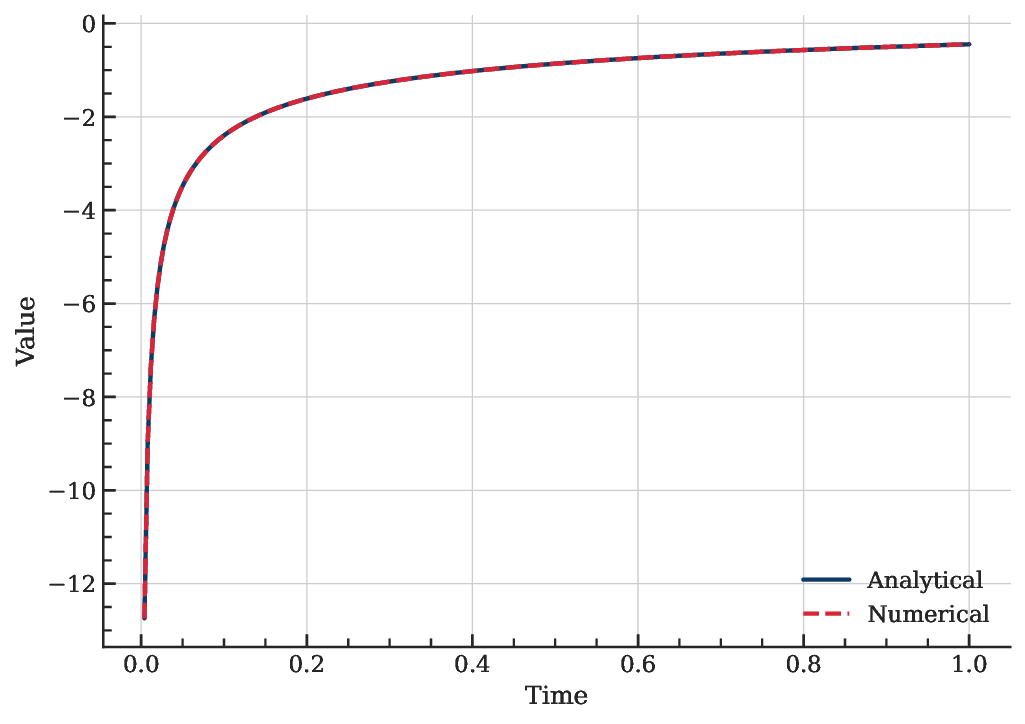}
        \caption{solution}
    \end{subfigure}
    \caption{
       Convergence analysis for Example \ref{OU-VIE}: log-log plot of the error (left) and comparison of numerical and exact solutions for $f(t)$ (right). The number of time steps used in the right panel is $m=2^8$.}\label{fig:Error-converg}
\end{figure}

\section{Derivation of the Second-kind VIE}\label{sec:VIESK} 

As we have seen, recovering $f(t)$ is associated with certain difficulties. In particular, it requires regularization and the numerical solution of a singular first-kind VIE, whose convergence results remain rather limited; see, for example, \cite{WA-1971}, \cite{E-1981}, and the comprehensive discussion in \cite{Brunner(2004)}. As noted in Section 6.3 of \cite{Brunner(2004)}, this problem ``\textit{poses a formidable challenge, and so far only a few partial results are known}''. Below, we will show that the recovery of $f(t)$ can be reduced to solving a VIE of the second kind with the same weak singularity, for which the numerical theory is much better developed. The idea is to treat the representation \eqref{G-single} as an integral equation with a parameter $x$, which naturally suggests differentiating with respect to this parameter and then evaluating the resulting expression at the barrier $b(t)$, as before. To demonstrate this idea, we consider the case of a flat barrier $b$ and a geometric Brownian motion governed by the SDE
\begin{equation}
dX_t = \mu X_t\,dt + \sigma X_t\,dW_t,
\quad
X_0=x_0>0,
\end{equation}
where $\mu\in\mathbb{R}$ and $\sigma>0$ are constants. The transition density is given explicitly by
\begin{equation}
p_{GBM}(t,b;u,x)
=
\frac{1}{b\sigma\sqrt{2\pi\theta}}
\exp\!\left(
-\frac{\left(\log(b/x)-(\mu-\sigma^2/2)\theta\right)^2}
{2\sigma^2\theta}
\right),
\label{eq:gbm_density}
\end{equation}
where $\theta=t-u$. Let $\ell=\log\left(\frac{x}{b}\right)$, $r=\mu-\frac{\sigma^2}{2}$.
Differentiating the transition density with respect to the initial point $x$, we obtain
\begin{equation}
\partial_x p_{GBM}(t,b;u,x)
=
-\frac{
\ell+r\theta
}
{b\,x\,\sigma^3\theta^{3/2}\sqrt{2\pi}}
\exp\!\left(
-\frac{\left(\ell+r\theta\right)^2}
{2\sigma^2\theta}
\right).
\end{equation}
We decompose the kernel as
\begin{equation}
\sigma^2b^2\partial_x p_{GBM}(t,b;u,x)
=
-\left(K_{\mathrm{sing}}(\theta,x)
+
K_{\mathrm{reg}}(\theta,x)\right),
\end{equation}
where
\begin{align}
K_{\mathrm{sing}}(\theta,x)
&=
\frac{b}{x}
\frac{\ell}
{\sigma\sqrt{2\pi}\,\theta^{3/2}}
\exp\!\left(
-\frac{(\ell+r\theta)^2}
{2\sigma^2\theta}
\right),
\\
K_{\mathrm{reg}}(\theta,x)
&=
\frac{b}{x}
\frac{r}
{\sigma\sqrt{2\pi}\theta^{1/2}}
\exp\!\left(
-\frac{(\ell+r\theta)^2}
{2\sigma^2\theta}
\right).
\end{align}
Now, we rewrite the representation \eqref{cdf-single} for the hitting probability in our setting and differentiate both sides with respect to $x$. The differentiation under the integral sign is justified by the dominated convergence theorem, since $G_x(u,b+)$ is locally integrable, while the kernel $\partial_x p_{GBM}(t,b;u,x)$ is continuous and admits an integrable majorant (uniformly in $u$ near $t$) for $x >b$. We also use the time-homogeneity of the process, which implies that
$p_{\mathrm{GBM}}(T-u,b;T-t,x)
=
p_{\mathrm{GBM}}(t,b;u,x)$.
Hence, we have
\begin{equation}
 G_x(t,x)
=
\partial_x\mathbb{P}_{0,x}(X_t\le b)
-\frac12
\int_0^t
G_x(u,b+)\sigma^2b^2
\partial_x p_{GBM}(t,b;u,x)\,du.
\label{eq:dxV_gbm}
\end{equation}
Substituting the transition density function decomposition into \eqref{eq:dxV_gbm}, we obtain
\begin{equation}
G_x(t,x)
=
\partial_x\mathbb{P}_{0,x}(X_t\le b)
+\frac12 I_1(t,x)
+\frac12 I_2(t,x),
\end{equation}
where
\begin{align}
I_1(t,x)
=
\int_0^t
G_x(u,b+)
K_{\mathrm{sing}}(t-u,x)\,du,
\qquad
I_2(t,x)
=
\int_0^t
G_x(u,b+)
K_{\mathrm{reg}}(t-u,x)\,du.
\end{align}
For the singular component, observe that
\begin{equation}
\exp\left(
-\frac{(\ell+r\theta)^2}
{2\sigma^2\theta}
\right)
=
\exp\!\left(
-\frac{\ell^2}
{2\sigma^2\theta}
\right)
\exp\!\left(
-\frac{r\ell}{\sigma^2}
\right)
\exp\!\left(
-\frac{r^2\theta}{2\sigma^2}
\right).
\end{equation}
Hence,
\begin{equation}
I_1(t,x)
=
\frac{b}{x}
e^{-r\ell/\sigma^2}
\int_0^t
h_t(\theta)\,
\delta_\ell(\theta)\,
d\theta,
\label{eq:I1}
\end{equation}
where
\begin{equation}
h_t(\theta)
=
\exp\!\left(
-\frac{r^2\theta}{2\sigma^2}
\right)
G_x(t-\theta,b+),
\quad
\delta_\ell(\theta)
=
\frac{\ell}
{\sigma\sqrt{2\pi}\,\theta^{3/2}}
\exp\!\left(
-\frac{\ell^2}
{2\sigma^2\theta}
\right).
\label{eq:rho}
\end{equation}
The family $\{\delta_\ell\}_{\ell>0}$ is an approximation of the identity on $(0,\infty)$. Indeed, one readily verifies that
\begin{equation}
\delta_\ell(\theta)\ge 0,
\qquad
\int_0^\infty \delta_\ell(\theta)\,d\theta = 1,
\qquad
\lim_{\ell\to0}
\int_\epsilon^\infty
\delta_\ell(\theta)\,d\theta
=0,
\quad \forall\,\epsilon>0.
\end{equation}
Therefore,
\begin{equation}
\lim_{x\to b+}
I_1(t,x)
=
\lim_{\ell\to 0+}
e^{-r\ell/\sigma^2}
\int_0^t
h_t(\theta)\delta_\ell(\theta)\,d\theta
=
h_t(0)
=
G_x(t,b+).
\label{eq:jump_gbm}
\end{equation}
Thus, the singular component generates the jump term. Next, consider the regular component. Since $K_{\mathrm{reg}}(\theta,x)
=
\mathcal{O}(\theta^{-1/2})$, $
\theta\to0+$,
the kernel is integrable near the origin. By the dominated convergence theorem,
\begin{equation}
\lim_{x\to b+}
I_2(t,x)
=
\frac{r}{\sigma\sqrt{2\pi}}
\int_0^t
\frac{G_x(u,b+)}
{(t-u)^{1/2}}
\exp\!\left(-\frac{r^2(t-u)}{2\sigma^2}\right)\,du.
\label{eq:I2_limit}
\end{equation}
Taking the limit $x\to b+$ in \eqref{eq:dxV_gbm}, together with \eqref{eq:jump_gbm} and \eqref{eq:I2_limit}, yields
\begin{align}
G_x(t,b+)
=&
\lim_{x\to b+}\partial_x\mathbb{P}_{0,x}(X_t\le b)
+\frac12G_x(t,b+)
\\
&+\frac{r}{2\sigma\sqrt{2\pi}}
\int_0^t
\frac{G_x(u,b+)}
{(t-u)^{1/2}}
\exp\!\left(-\frac{r^2(t-u)}{2\sigma^2}\right)\,du.\nonumber
\end{align}
Rearranging the terms, we arrive at the following VIE of the second kind with a weak singularity for $G_x(t,b+)$
\begin{equation}
G_x(t,b+)
=
-\frac{2}{b\sigma\sqrt{2\pi t}}
\exp\!\left(-\frac{r^2t}{2\sigma^2}\right)
+
\frac{r}{\sigma\sqrt{2\pi}}
\int_0^t
\frac{G_x(u,b+)}{\sqrt{t-u}}
\exp\!\left(-\frac{r^2(t-u)}{2\sigma^2}\right)\,du.
\label{eq:gbm_second_kind_final}
\end{equation}
The same approach easily extends to SBM, GBM and OU processes with time-dependent coefficients. It should be possible to derive it for the general case \eqref{SDE} as well. The only additional ingredient is an asymptotic expansion of the derivative of the transition density into terms with singularities of orders $(t-u)^{-3/2}$ and $(t-u)^{-1/2}$, together with a regular remainder. Since the latter is generally not available in closed form, the resulting VIE is of the second kind with an unknown kernel, which can nevertheless be computed in particular cases, as illustrated above. Similar arguments extend directly to systems of VIEs in the case of double barrier problems. We omit the derivations here and leave them for future work. Thereby, we transform the equation into a VIE of the second kind without using the standard method of fractional integration \cite[p 72]{L-1985}.  Although the issue of unbounded solutions remains, it can be effectively handled using numerical techniques, such as those proposed in \cite{FO-2022}.
\\

\section{Markov Chain Algorithm}
\label{S:MCA}

The general formulas derived in previous sections, e.g., Theorem \ref{cdf-single}, depend on the transition density function $q(s,y;t,x)$ of the auxiliary process $Y$. In the examples studied above, $q$ had an explicit form. 
However, this is rarely the case, or the tdf may have a highly complicated form, as in the CEV model. Therefore, we propose an effective algorithm for the numerical computation of the transition density based on a Markov chain approximation. We follow \cite{RBV-2012} and \cite{BCLP-2018}, but employ the forward Kolmogorov equation.

Primarily, we consider the time-homogeneous case. The key idea is to approximate the process by a continuous-time Markov chain, for which the transition probability matrix between two arbitrary time points can be computed explicitly. The theoretical convergence of the algorithm follows from the Trotter-Kurtz theorem; see \cite{K-1969}. The process $X$ is characterized by
\begin{equation}
dX_t
=
\mu(X_t)\,dt
+
\sigma(X_t)\,dW_t,
\qquad
X_0=x_0,
\end{equation}
with tdf $p(s,y;t,x)$ already defined. Since $X$ is time-homogeneous, its tdf depends only on the elapsed time $s-t$, and we can fix the initial position at $(0,x_0)$; hence, with $t$ denoting the elapsed time,
$p(t,x):=p(t,x;0,x_0)$ satisfies the Kolmogorov forward equation:
\begin{equation}
\partial_t p(t,x)
=
\mathbb L_X^* p(t,x),
\qquad
p(0,x)=\delta(x-x_0),
\end{equation}
where $\delta(x)$ is the Dirac delta function and
$\mathbb{L}_X^*$ is the adjoint generator of $\mathbb L_X$:
\begin{equation}
\mathbb{L}_X^*p(x)
=
-\partial_x\!\bigl[\mu(x)p(x)\bigr]
+
\frac12\,\partial_{xx}\!\bigl[\sigma^2(x)p(x)\bigr].
\end{equation}
The diffusion process is approximated by a continuous-time Markov chain with finite state space $\mathcal X = \{x_1,x_2,\dots,x_n\}$ on a uniform lattice with spacing $\Delta x=x_2-x_1$. The generator $\mathbb A$ is obtained by restricting the differential operator $\mathbb L_X$ to $\mathcal X$ and applying a finite-difference discretization. Let
$
\mathbf P(t)
=
\bigl[p(t,x_i;0,x_j)\bigr]_{i,j=1}^n
$
be the transition probability matrix satisfying
\begin{equation}
\frac{d}{dt}\mathbf P(t)
=
\mathbb A^* \mathbf P(t),
\qquad
\mathbf p(0)=\mathbb I,
\end{equation}
where $\mathbb I$ is the identity matrix and $\mathbb A^* = \mathbb A^{\top}$ is the tridiagonal matrix
\begin{equation}
\label{E:Astar}
\mathbb A^*_{i,j}
=
l_i\delta_{i-1,j}
+
d_i\delta_{i,j}
+
u_i\delta_{i+1,j},
\end{equation}
with $\delta_{p,q}$ the Kronecker delta and coefficients, for $1<i<n$, given by
\begin{equation}\label{E:lidiui}
l_i
=
\frac{\mu(x_{i-1})}{2\Delta x}
+
\frac{\sigma^2(x_{i-1})}{2\Delta x^2},
\qquad
d_i
=
-\frac{\sigma^2(x_i)}{\Delta x^2},
\qquad
u_i
=
-\frac{\mu(x_{i+1})}{2\Delta x}
+
\frac{\sigma^2(x_{i+1})}{2\Delta x^2}.
\end{equation}

When $\mu\not\equiv0$, we assume that the mesh satisfies
\begin{equation}
\Delta x
\le
\inf_{1\le i\le N}
\frac{\sigma^2(x_i)}{|\mu(x_i)|},
\end{equation}
so that the off-diagonal coefficients in \eqref{E:lidiui} are nonnegative and $\mathbb A$ is a valid generator of a Markov chain.
Reflecting (zero-flux) boundary conditions are used by setting the coefficient outside the domain to zero, $l_1=0$ and $u_n=0$, while keeping the off-diagonals at the edges consistent with the interior formula extended to the boundaries:
\begin{equation}
u_1 = -\frac{\mu(x_2)}{2\Delta x} + \frac{\sigma^2(x_2)}{2\Delta x^2},\qquad
l_n = \frac{\mu(x_{n-1})}{2\Delta x} + \frac{\sigma^2(x_{n-1})}{2\Delta x^2}.
\end{equation}
The diagonal elements $d_1$ and $d_n$ are then determined by the requirement that the column sums of $\mathbb A^*$ vanish (mass conservation), which gives
\begin{equation}
d_1 = -l_2 = -\frac{\mu(x_1)}{2\Delta x} - \frac{\sigma^2(x_1)}{2\Delta x^2},\qquad
d_n = -u_{n-1} = \frac{\mu(x_n)}{2\Delta x} - \frac{\sigma^2(x_n)}{2\Delta x^2}.
\end{equation}
For a sufficiently large domain, the probability density is exponentially small near the edges, so this boundary choice has a negligible effect on the numerical results.
Using operator theory, the solution can be expressed as a matrix exponential:
\begin{equation}
\mathbf P(t)
=
e^{t\mathbb A^*}\mathbf P(0).
\end{equation}
Calculations on the time grid $\{t_i\}_{i=0}^m$ can be performed iteratively according to
\begin{equation}
\mathbf P(t_{i+1})
=
e^{\Delta t\,\mathbb A^*}\mathbf P(t_i).
\end{equation}
Thus, it is sufficient to compute the matrix exponential
$e^{\Delta t\,\mathbb A^*}$
only once. For this purpose, we employ the scaling-and-squaring method combined with the Pad\'e approximation described in \cite{H-2005}. The idea is to exploit the fundamental property of the matrix exponential. For an arbitrary matrix $A\in\mathbb R^{n\times n}$, one writes
$e^A
=
\left(e^{A/2^r}\right)^{2^r}$,
and chooses an integer $r$ such that $\|A/2^r\|$ is of order one. Then $e^{A/2^r}$ is approximated by a rational function (Pad\'e approximation), and $e^A$ is computed by applying the squaring operation $r$ times.
In the case of a constant barrier, the problem simplifies significantly. It is sufficient to repeatedly multiply a vector $v\in\mathbb R^n$ by a matrix $e^A\in\mathbb R^{n\times n}$, which can be done efficiently using a single Krylov--Arnoldi decomposition from \cite{S-1992}, without explicitly computing $e^A$. The generalization to a piecewise time-homogeneous process is straightforward. If the process $X$ is homogeneous on each interval $(t_i,t_{i+1}]$ with the corresponding generator $\mathbb A_i$, then
\begin{equation}
\mathbf P(t_{i+1})
=
e^{\Delta t\,\mathbb A_i^*}\mathbf P(t_i).
\end{equation}
In the general time-inhomogeneous case, the computation can be performed using the Magnus expansion; see \cite{BCOR-2009}, especially Section 5 for numerical integration methods.

\begin{example} \label{CIR-example}
\textit{Feller process.}
Assume that $X$ follows
\begin{equation}
dX_t
=
\kappa(\theta-X_t)\,dt
+
\xi\sqrt{X_t}\,dW_t,
\end{equation}
where $\kappa$, $\theta$, and $\xi$ are constant parameters. The law of $X$ is given by a non-central chi-squared distribution, with the well-known transition density
\begin{equation}
p_{\mathrm{Feller}}(s,y;t,x)
=
\lambda e^{-(u+v)}
\left(\frac{v}{u}\right)^{\nu/2}
I_\nu\!\bigl(2\sqrt{uv}\bigr),
\qquad
y\ge0,
\end{equation}
where
\[
u
=
\lambda x e^{-\kappa(s-t)},
\qquad
v
=
\lambda y,
\qquad
\nu
=
\frac{d}{2}-1,
\]
and
\begin{equation}
\lambda
=
\frac{2\kappa}
{\xi^2\bigl(1-e^{-\kappa(s-t)}\bigr)},
\qquad
d
=
\frac{4\kappa\theta}{\xi^2}.
\end{equation}
In this example,
$\mu(x)=\kappa(\theta-x)$
and
$\sigma(x)=\xi\sqrt{x}$.
The approximate transition law is obtained by substituting these coefficients into \eqref{E:Astar}--\eqref{E:lidiui}. Figure~\ref{fig:CIR-main} illustrates that the distribution functions generated by the Markov chain approximation are indistinguishable from the analytical formulas and the Monte Carlo estimates.
\end{example}
\begin{figure}[h!]
    \centering
    \begin{subfigure}[t]{0.48\textwidth}
        \centering
        \includegraphics[width=\textwidth]{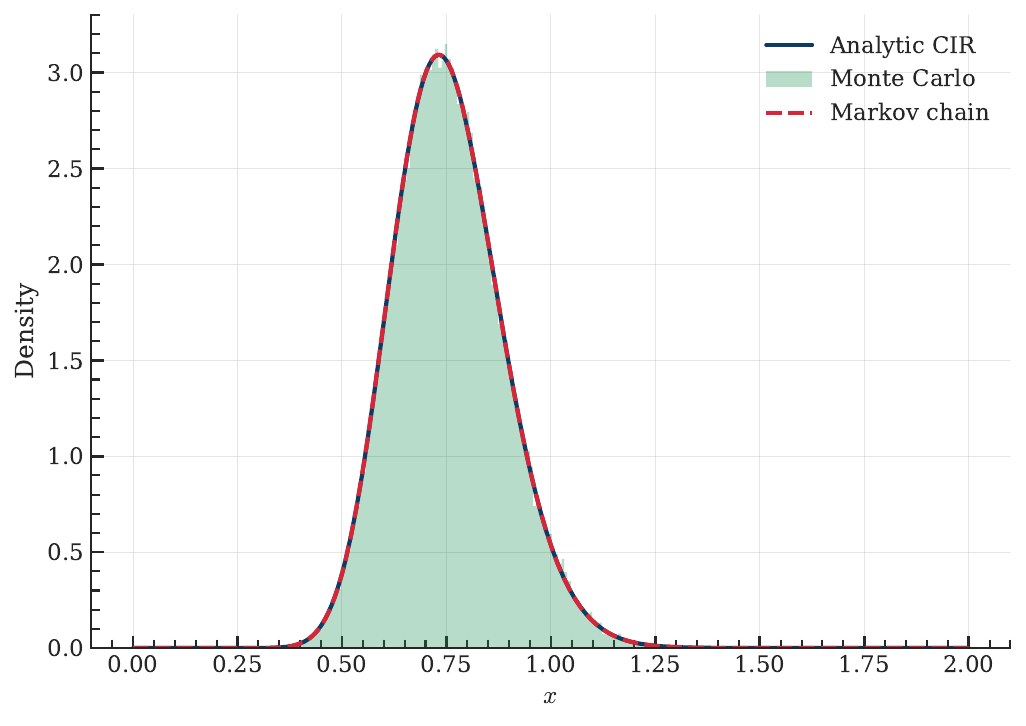}
        \caption{Probability density functions.}
        \label{fig:CIR-tdf}
    \end{subfigure}
    \hfill
    \begin{subfigure}[t]{0.48\textwidth}
        \centering
        \includegraphics[width=\textwidth]{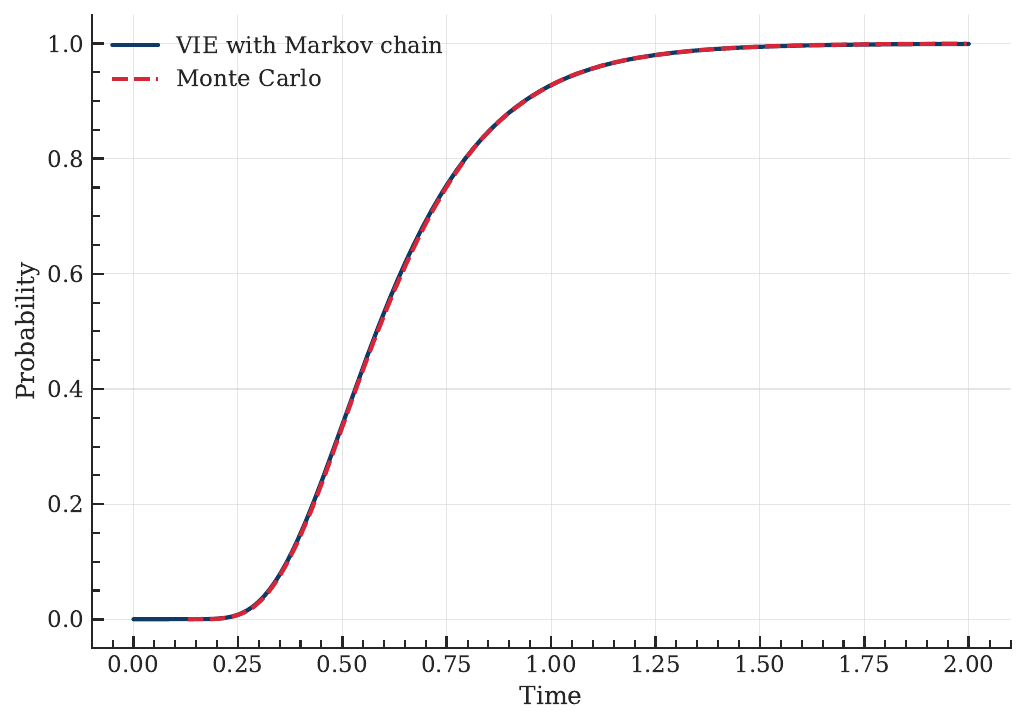}
        \caption{Hitting time distribution.}
        \label{fig:CIR-hitting-cdf}
    \end{subfigure}

    \caption{Feller process: tdf $p(T,x;0,b)$ of $X_T$ (left) and cumulative distribution function (right) of hitting time lower constant barrier. Parameters: $T = 2$, $X_0 = 1.75$, $b = 1$, $\kappa = 2$, $\theta = 0.75$, $\xi = 0.3$.  Computations are based on the Monte Carlo method with $10^4$ time steps and $10^5$ trajectories, and the VIE-MCA method with $m = 2^8$ time steps and $n = 2^8$ spatial steps.}
    \label{fig:CIR-main}
\end{figure}

\section{Local Stochastic Volatility} 
\label{S: volatility}

In this section, we extend our approach to a class of stochastic volatility models, often referred to as Local Stochastic Volatility (LSV) models, which are widely used in finance to calibrate the implied volatility surface while preserving realistic volatility dynamics. We consider the system
\begin{equation}\label{eq:LSV}
\begin{aligned}
dX_t &= \mu(t,X_t)\,dt
      + \sigma(t,X_t)\varphi(V_t)\,dW_t,
      \qquad X_0 = x,\\
dV_t &= \eta(t,V_t)\,dt
      + \xi(t,V_t)\,dB_t,
      \qquad V_0 = v,
\end{aligned}
\end{equation}
where the coefficients $(\mu,\sigma,\varphi,\eta,\xi)$ satisfy suitable regularity conditions ensuring the existence and uniqueness of a global strong solution $(X,V)$; see \cite{LSZ-2020}. The processes $W$ and $B$ are $\mathbb{F}$-Brownian motions under the probability measure $\mathbb{P}$ and may be correlated, namely,
\begin{equation}
dW_t
=
\rho(t)\,dB_t
+
\sqrt{1-\rho^2(t)}\,dW_t^\perp,
\end{equation}
where $\rho(t)\in[-1,1]$ is a time-dependent correlation coefficient and $W^\perp$ is an $\mathbb{F}$-Brownian motion independent of $B$.

As in the previous sections, we are interested in the first-hitting time problem for the process $X$. In the financial context, this problem arises naturally in structural credit risk models and in the pricing of barrier options with one or two barriers.

For implementation, we apply a conditional Monte Carlo method to estimate the hitting-time probability, in the spirit of \cite{W-1997}, \cite{LP-2009}, and \cite{DKS-2025}. To this end, we simulate $N$ trajectories of $B$ and, consequently, of $V$ using, for example, an Euler or Milstein scheme with $m$ time steps. We denote the resulting trajectories by $B^{(n)}$ and $V^{(n)}$, respectively, for $n=1,\dots,N$. Then, conditional on the $n$-th simulated path, the dynamics of $X^{(n)}$ can be written as
\begin{align}
dX_t^{(n)}
&=
\mu(t,X_t^{(n)})\,dt
+
\rho(t)\sigma(t,X_t^{(n)})\varphi(V_t^{(n)})\,dB_t^{(n)}
\nonumber\\
&\quad
+
\sqrt{1-\rho^2(t)}
\,\sigma(t,X_t^{(n)})
\,\varphi(V_t^{(n)})
\,dW_t^\perp,
\end{align}
with $X_0^{(n)}=x$. Fix $n$ and define
$\tilde \mu^{(n)}(t,X_t^{(n)})
=
\mu(t,X_t^{(n)})
+
\rho(t)\sigma(t,X_t^{(n)})
\varphi(V_t^{(n)})
\Delta B_t^{(n)}/\Delta t,
$
and
$
\tilde \sigma^{(n)}(t,X_t^{(n)})
=
\sqrt{1-\rho^2(t)}
\,\sigma(t,X_t^{(n)})
\,\varphi(V_t^{(n)}),
$
where $\Delta B_t^{(n)}$ denotes the increment of the simulated path $B^{(n)}$ over a time step of length $\Delta t=T/m$. Then
\begin{equation}
dX_t^{(n)}
=
\tilde \mu^{(n)}(t,X_t^{(n)})\,dt
+
\tilde \sigma^{(n)}(t,X_t^{(n)})\,dW_t^\perp.
\end{equation}

Hence, the semi-analytic representations derived in the previous sections can be applied to compute the conditional hitting-time probability in the same way as for a one-dimensional diffusion. Averaging over the simulated volatility paths yields the estimator
\begin{equation}\label{MC-estimate}
G_{\mathrm{LSV}}(t,x)
\approx
\frac{1}{N}
\sum_{n=1}^{N}
G\!\left(t,x;\tilde\mu^{(n)},\tilde\sigma^{(n)}\right),
\end{equation}
where $G(t,x;\tilde\mu^{(n)},\tilde\sigma^{(n)})$ is computed using the representations \eqref{G-single}, and \eqref{F-double-A}.

To improve the convergence rate of the estimator, we follow \cite{W-1997} and employ a quasi-Monte Carlo approach. Specifically, low-discrepancy sequences, such as Sobol' or Halton sequences, are used to generate the Brownian driver $B$ of the stochastic volatility process. We choose the number of time steps in the discretization as $m=2^I$ for some integer $I$. For each $n=1,\ldots,N$, let
$
\mathbf{x}^{\,n}
=
(x_1^{\,n},\ldots,x_m^{\,n})
\in [0,1]^m
$
denote the $n$-th point of the low-discrepancy sequence. The components are transformed into standard Gaussian variates through
$
z_j^{\,n}
=
\Phi^{-1}(x_j^{\,n})$, $ j=1,\ldots,m$,
where $\Phi$ denotes the cumulative distribution function of the standard normal distribution.

The Brownian paths are then generated using the Brownian bridge construction with binary midpoint refinement; see \cite{CM-1995}. We initialize
$
B_0^{(n)}=0$, $
B_T^{(n)}=\sqrt{T}\,z_m^{\,n}$,
and recursively construct the intermediate values. At level $i=1,\ldots,I$, the step size is halved, $h\mapsto h/2$, and for each midpoint $(2j-1)h$ we set
\begin{equation}
B_{(2j-1)h}^{(n)}
=
\frac{
B_{2(j-1)h}^{(n)}
+
B_{2jh}^{(n)}
}{2}
+
\sqrt{\frac{h}{2}}\,
z_{(2j-1)h}^{\,n},
\qquad
j=1,\ldots,2^{i-1}.
\end{equation}
Combined with quasi-Monte Carlo sampling, the Brownian bridge construction reduces the effective dimension of the simulation problem and typically leads to improved convergence compared with standard Monte Carlo methods.
\begin{example}
One of the most widely used stochastic volatility models is the Heston model. It is defined by the system
\begin{align}
dX_t &= \mu X_t\,dt + \sqrt{V_t}\,X_t\,dW_t,
\qquad X_0=x,
\\
dV_t &= \kappa(\theta - V_t)\,dt + \xi \sqrt{V_t}\,dB_t,
\qquad V_0=v,
\end{align}
where $V$ denotes the instantaneous variance process, modeled as a mean-reverting square-root diffusion (also known as Feller process). The parameters $(\mu,\kappa,\theta,\xi,\rho)$ and the initial values $(X_0,V_0)$ are assumed to be constant. Figure~\ref{fig:Heston} illustrates the numerical solution and highlights the impact of the vol-of-vol parameter $\xi$.
\end{example}
\begin{figure}[h]
    \centering
    \begin{subfigure}[t]{0.48\textwidth}
        \centering
        \includegraphics[width=\textwidth]{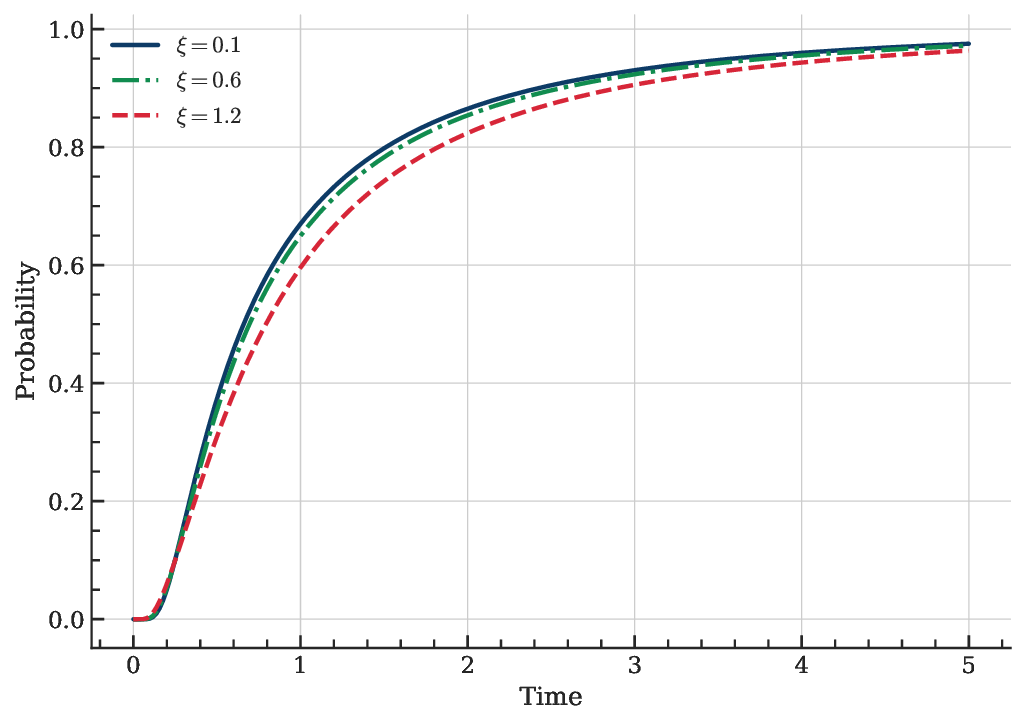}
        \caption{CDF}
        \label{fig:HESTON-hitting-cdf}
    \end{subfigure}
    \hfill
    \begin{subfigure}[t]{0.48\textwidth}
        \centering
        \includegraphics[width=\textwidth]{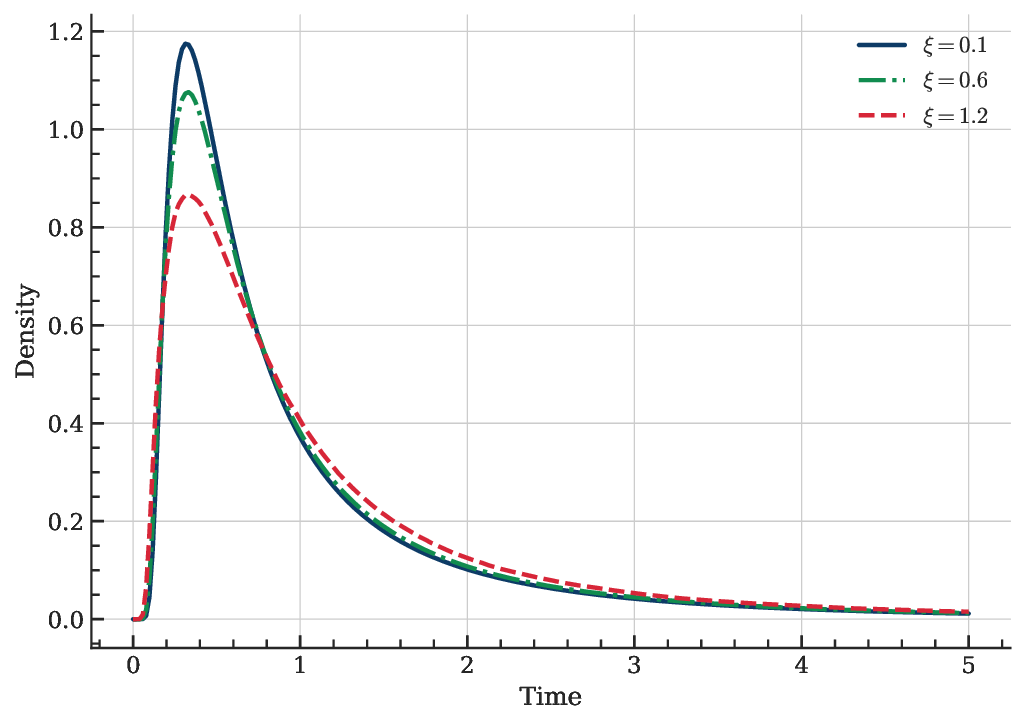}
        \caption{PDF}
        \label{fig:HESTON-hitting-pdf}
    \end{subfigure}
    \caption{
    Heston model:
    cumulative distribution function (left) and probability density function (right) of the first hitting time.
    Effect of $\xi$ (vol-of-vol).
    Parameters:  $T=5$, $X_0=10$, $b=7$, $V_0 = 0.09$,  $\mu=-0.1$, $\theta = 1.21$, $\kappa = 0.5$, $\rho=0$. Computations use the MC-VIE method in Section \ref{S: volatility} with $m = 2^8$ time steps and $N = 2^{10}$ trajectories.
    } \label{fig:Heston}
\end{figure}

\appendix 

\section*{Appendix}
\section{Proof of Theorem \ref{T:error}}
\label{S:APP}

The convergence of the trapezoidal method for VIEs (with 
$g(0) = 0$) was established in \cite{E-1981} and \cite{W-1972} using different techniques. The first proof relies on the semicirculant property of the coefficient matrix, which no longer holds in our setting, and a power series representation, while the second is based on a regularity theorem for linear systems (see  Chapter I, \S 2 in \cite{KK-1958}) --- an approach we will also follow.
\begin{lemma}\label{Kantorovich-Krylov}
Let $\{x_i\}_{i\ge 1}$ be a sequence of real numbers. Assume that there exists a constant $K>0$ such that $|x_1|\le K$ and, for all $i\ge 1$,
\begin{equation}
|x_{i+1}| \le \sum_{j=1}^{i} |c_{i,j}|\,|x_j| + |b_i|.
\end{equation}
Suppose further that
\begin{equation}
\varrho_i := 1-\sum_{j=1}^{i}|c_{i,j}| > 0,
\quad
|b_i| \le K\varrho_i,
\quad i\ge 1.
\end{equation}
Then $|x_i| \le K$, for all $i \ge 1$.
\end{lemma}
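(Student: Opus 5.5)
The claim is Lemma \ref{Kantorovich-Krylov}: under its hypotheses, $|x_i|\le K$ for all $i\ge1$. I would prove it by strong induction on $i$.

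\emph{Base case.} For $i=1$ the bound $|x_1|\le K$ is one of the assumptions, so there is nothing to show.

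\emph{Inductive step.} Fix $i\ge1$ and suppose $|x_j|\le K$ for every $j=1,\dots,i$. Substitute these bounds into the recursive inequality and then use $|b_i|\le K\varrho_i$:
\begin{equation*}
|x_{i+1}|\le \sum_{j=1}^{i}|c_{i,j}|\,|x_j|+|b_i|
\le K\sum_{j=1}^{i}|c_{i,j}|+K\varrho_i .
\end{equation*}
By the definition of $\varrho_i$,
\begin{equation*}
K\sum_{j=1}^{i}|c_{i,j}|+K\varrho_i
=K\Bigl(\sum_{j=1}^{i}|c_{i,j}|+1-\sum_{j=1}^{i}|c_{i,j}|\Bigr)=K .
\end{equation*}
Hence $|x_{i+1}|\le K$, which closes the induction and gives $|x_i|\le K$ for all $i\ge1$.

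The positivity assumption $\varrho_i>0$ is not used in this argument. The inequality $|b_i|\le K\varrho_i$ already forces $\varrho_i\ge0$, and the estimate goes through with $\varrho_i\ge0$ alone. The strict inequality matters only in how the lemma is applied afterwards, since it guarantees that $\sum_j|c_{i,j}|<1$.

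There is no genuine obstacle in the lemma itself. The only point needing care is that the recursion for $x_{i+1}$ involves all earlier terms $x_1,\dots,x_i$, so the induction must be strong induction rather than a one-step induction.

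The difficulty in proving Theorem \ref{T:error} lies downstream, in applying this lemma to the error recursion. Take $x_i$ to be the error $\tilde f(t_i)-\hat f_i$, scaled appropriately. Dividing \eqref{VIE-discrete} by the diagonal coefficient $k_{i,i}W^{(2)}_{i,i-1}$ gives a system of the form required by the lemma. One then has to verify that $\sum_j|c_{i,j}|$ stays uniformly below $1$, and that the consistency (truncation) error $b_i$ is $\mathcal O(\Delta t^{3/2})\,\varrho_i$. These two estimates are where the regularization exponent $\alpha$ and the bounds on the weights $W^{(1)},W^{(2)}$ enter.
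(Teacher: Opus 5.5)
Your strong induction is correct and complete. The paper gives no proof of its own; it invokes the regularity theorem of Kantorovich--Krylov for linear systems, which in this lower-triangular, inequality form reduces to exactly your argument, with the key identity $K\sum_{j}|c_{i,j}|+K\varrho_i=K$.

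One correction to your closing remarks on the application: there $\sum_j|c_{i,j}|$ is \emph{not} uniformly below $1$. The recursion arises only after subtracting the $i$-th error relation from the $(i+1)$-th and dividing by $W^{(2)}_{i+1,i}$; dividing the undifferenced relation by its diagonal weight would give coefficient sums of order $i^{1/2}$. After differencing, $\varrho_i\asymp i^{-1}\to0$, which is why the lemma asks only for $\varrho_i>0$ and is paired with the matching bound $|b_i|=\mathcal{O}(\Delta t^{3/2}i^{-1})$.
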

Anticipating the application of Lemma~\ref{Kantorovich-Krylov} below,
$\{x_i\}_{i\ge 1}$ will play the role of the error sequence, while $K$ will be of order
$\mathcal{O}(\Delta t^{3/2})$. 

For the linear interpolant $Q_j(t_i,u)$ defined in \eqref{linear-interpolation}, where the exact values $\tilde f$ are used in place of the approximations $\hat f$, let $R_j(t_i,u) = Q_j(t_i,u) - \tilde f(u)k(t_i,u)$ denote the interpolation remainder. By the standard remainder formula for linear Lagrange interpolation,
\begin{equation}\label{remainder}
R_j(t_i,u)
=
\frac{1}{2}(t_{j+1}-u)(u-t_j)\phi(t_i,t_j),
\quad
t_j<u\le t_{j+1},
\quad
j=0,1,\dots,i-1,
\end{equation}
where
\begin{equation}
\phi(t_i,t_j)
=
\partial_{uu}\!\left[\tilde f(u)k(t_i,u)\right]\Big|_{u=t_j}
+r_{i,j}(u),
\qquad
r_{i,j}(u)=\mathcal{O}(\Delta t^3).
\end{equation}
Discretizing \eqref{VIE-1-standard1}, we obtain
\begin{align}
g(t_i)
=&
\sum_{j=1}^{i}
\int_{t_{j-1}}^{t_j}
\frac{Q_j(t_i,u)}
     {u^\alpha (t_i-u)^{1/2}}
\,du
-
\sum_{j=1}^{i}
\int_{t_{j-1}}^{t_j}
\frac{R_j(t_i,u)}
     {u^\alpha (t_i-u)^{1/2}}
\,du
\\
=&\,
\Delta t^{1/2-\alpha}
k_{i,0}W_{i,0}^{(1)}\tilde f(0)
+
\Delta t^{1/2-\alpha}
\sum_{j=1}^{i-1}
k_{i,j}
\bigl(
W_{i,j}^{(1)}
+
W_{i,j-1}^{(2)}
\bigr)
\tilde f(t_j)
\nonumber\\
&+
\Delta t^{1/2-\alpha}
k_{i,i}W_{i,i-1}^{(2)}
\tilde f(t_i)
-
\sum_{j=1}^{i}
\int_{t_{j-1}}^{t_j}
\frac{R_j(t_i,u)}
     {u^\alpha (t_i-u)^{1/2}}
\,du,
\nonumber
\end{align}
where the weights $W_{i,j}^{(1)}$ and $W_{i,j}^{(2)}$ are defined in \eqref{E:W1} and \eqref{E:W2}, respectively. Subtracting \eqref{VIE-discrete} from the above identity, we find that the error $\varepsilon_i = \hat f_i - \tilde f(t_i)$ satisfies the recursion
\begin{align}
\Delta t^{1/2-\alpha}
k_{i,i}W_{i,i-1}^{(2)}
\varepsilon_i
+
\Delta t^{1/2-\alpha}
\sum_{j=1}^{i-1}
k_{i,j}
\bigl(
W_{i,j}^{(1)}
+
W_{i,j-1}^{(2)}
\bigr)
\varepsilon_j
&\\
=
\sum_{j=1}^{i}
\int_{t_{j-1}}^{t_j}
\frac{R_j(t_i,u)}
     {u^\alpha (t_i-u)^{1/2}}
\,du
-
\Delta t^{1/2-\alpha}
k_{i,0}W_{i,0}^{(1)}
\varepsilon_0.
\nonumber
\end{align}
Finally, subtracting the above relation from the corresponding relation with $i$ replaced by $i+1$ yields the error equation
\begin{align}\label{error-equation0}
k_{i+1,i+1}W_{i+1,i}^{(2)}\varepsilon_{i+1}
&+
\Big(
k_{i+1,i}\bigl(W_{i+1,i}^{(1)}+W_{i+1,i-1}^{(2)}\bigr)
-k_{i,i}W_{i,i-1}^{(2)}
\Big)\varepsilon_i
\\
&+
\sum_{j=1}^{i-1}
\Big[
k_{i+1,j}\bigl(W_{i+1,j}^{(1)}+W_{i+1,j-1}^{(2)}\bigr)
-k_{i,j}\bigl(W_{i,j}^{(1)}+W_{i,j-1}^{(2)}\bigr)
\Big]
\varepsilon_j
\nonumber\\
=&
\Delta t^{\alpha-1/2}
\sum_{j=1}^{i+1}
\int_{t_{j-1}}^{t_j}
\frac{R_j(t_{i+1},u)}
     {u^\alpha (t_{i+1}-u)^{1/2}}
\,du
\nonumber\\
&-\Delta t^{\alpha-1/2}
\sum_{j=1}^{i}
\int_{t_{j-1}}^{t_j}
\frac{R_j(t_i,u)}
     {u^\alpha (t_i-u)^{1/2}}
\,du
\nonumber\\
&
+k_{i+1,0}W_{i+1,0}^{(1)}\varepsilon_0
-k_{i,0}W_{i,0}^{(1)}\varepsilon_0.
\nonumber
\end{align}
For the remainder of the proof, we first consider the case $k(t,u)\equiv1$ and return to the general kernel at the end. Moreover, the most relevant case for our purposes is $\alpha=1/2$. Therefore, throughout the proof, we restrict ourselves to this case. The error equation then simplifies to
\begin{align}\label{error-equation}
&W_{i+1,i}^{(2)}\varepsilon_{i+1}
+
\Big(
W_{i+1,i}^{(1)}
+
W_{i+1,i-1}^{(2)}
-
W_{i,i-1}^{(2)}
\Big)
\varepsilon_i
\notag\\
&\quad+
\sum_{j=1}^{i-1}
\Big[
\bigl(W_{i+1,j}^{(1)}+W_{i+1,j-1}^{(2)}\bigr)
-
\bigl(W_{i,j}^{(1)}+W_{i,j-1}^{(2)}\bigr)
\Big]
\varepsilon_j
=
S_{i+1}-S_i,
\end{align}
where
\begin{equation}
S_i
=
\sum_{j=1}^{i}
\int_{t_{j-1}}^{t_j}
\frac{R_j(t_i,u)}
     {u^{1/2}(t_i-u)^{1/2}}
\,du
-
W_{i,0}^{(1)}\varepsilon_0.
\end{equation}
Dividing both sides by $W_{i+1,i}^{(2)}$, we obtain
\begin{equation}
\varepsilon_{i+1}
=
\sum_{j=1}^{i} c_{i,j}\varepsilon_j
+b_i,
\end{equation}
where
\begin{align}
c_{i,j}
&=
\frac{
\bigl(W_{i,j}^{(1)}+W_{i,j-1}^{(2)}\bigr)
-
\bigl(W_{i+1,j}^{(1)}+W_{i+1,j-1}^{(2)}\bigr)
}
{W_{i+1,i}^{(2)}},
\qquad j<i,
\\
c_{i,i}
&=
\frac{
W_{i,i-1}^{(2)}
-
\bigl(W_{i+1,i}^{(1)}+W_{i+1,i-1}^{(2)}\bigr)
}
{W_{i+1,i}^{(2)}}, \qquad
b_i =
\frac{S_{i+1}-S_i}{W_{i+1,i}^{(2)}}.
\end{align}
We will find $\varrho_i$ and show that it behaves as $\mathcal{O}(i^{-1})$ when $\alpha = 1/2$. First, note that the coefficients $c_{i,j}$ are positive for all $1\le j\le i$. Using the definitions of $c_{i,j}$, we obtain
\begin{align}
    W_{i+1,i}^{(2)}& \left(1 - \sum_{j=1}^i |c_{i,j}| \right) \\
    =&W_{i+1,i}^{(2)} - \sum_{j=1}^{i-1} \left[ \left(W_{i,j}^{(1)} + W_{i,j-1}^{(2)}\right) - \left(W_{i+1,j}^{(1)} + W_{i+1,j-1}^{(2)}\right) \right]  \nonumber\\
    &-W_{i,i-1}^{(2)} + \left(W_{i+1,i}^{(1)} + W_{i+1,i-1}^{(2)}\right) \nonumber\\
    =& -\sum_{j=1}^{i-1}  \left(W_{i,j}^{(1)} + W_{i,j}^{(2)}\right) + \sum_{j=1}^{i}  \left(W_{i+1,j}^{(1)} + W_{i+1,j}^{(2)}\right)\nonumber  \\
    & +W_{i+1,0}^{(2)} - W_{i,0}^{(2)}\nonumber \\
    =& \underbrace{
-\int_1^i
u^{-1/2}(i-u)^{-1/2}\,du
+
\int_1^{i+1}
u^{-1/2}(i+1-u)^{-1/2}\,du
}_{I_1(i)}
\nonumber\\
    &+
\underbrace{
\int_0^1
u^{1/2}(i+1-u)^{-1/2}\,du
-
\int_0^1
u^{1/2}(i-u)^{-1/2}\,du
}_{I_2(i)}.
\nonumber
\end{align}
The above integrals can be evaluated explicitly:
\begin{align}
\int_1^k
u^{-1/2}(k-u)^{-1/2}\,du
&=
\pi
-
2\arcsin\!\left(\frac{1}{\sqrt{k}}\right),
\\
\int_0^1
u^{1/2}(k-u)^{-1/2}\,du
&=
k\,\arctan\!\left(\frac{1}{\sqrt{k-1}}\right)
-\sqrt{k-1}.
\end{align}
In particular, it follows that $\varrho_i>0$. Moreover, a Taylor expansion yields
\begin{equation}
I_1(i)
=
i^{-3/2}
+
\mathcal{O}(i^{-5/2}),
\qquad
I_2(i)
=
-\frac13\,i^{-3/2}
+
\mathcal{O}(i^{-5/2}).
\end{equation}
It remains to note that $W_{i+1,i}^{(2)} = \mathcal{O}(i^{-1/2})$, which yields the desired expression
\begin{equation}
0<\varrho_i
=
1-\sum_{j=1}^{i}|c_{i,j}|
=
\mathcal{O}(i^{-1}),
\qquad i\ge1.
\end{equation}
We now turn to the coefficients $b_i$. We want to show that $|b_i|=\mathcal{O}(\Delta t^{3/2}i^{-1})$. To this end, we expand the difference $S_{i+1}-S_i$. Since $k(t,u)\equiv1$, the remainder term \eqref{remainder} involves only $\tilde f^{(2)}$, and after substituting \eqref{remainder} and performing a change of variables we obtain
\begin{align}
S_{i+1}-&S_i
\\
=&\frac12
\sum_{j=1}^{i+1}
\tilde f^{(2)}(t_{j-1})
\int_{t_{j-1}}^{t_j}
\frac{(t_j-u)(u-t_{j-1})}
     {u^{1/2}(t_{i+1}-u)^{1/2}}
\,du
\nonumber\\
&-
\frac12
\sum_{j=1}^{i}
\tilde f^{(2)}(t_{j-1})
\int_{t_{j-1}}^{t_j}
\frac{(t_j-u)(u-t_{j-1})}
     {u^{1/2}(t_i-u)^{1/2}}
\,du
\nonumber\\
&+
W_{i,0}^{(1)}\varepsilon_0
-
W_{i+1,0}^{(1)}\varepsilon_0
+
\omega_i(\Delta t)
\nonumber\\
=&
\bigl(
W_{i,0}^{(1)}
-
W_{i+1,0}^{(1)}
\bigr)
\varepsilon_0
\nonumber\\
&+
\frac{\Delta t^2}{2}
\tilde f^{(2)}(t_i)
\int_0^1
\frac{u(1-u)}
     {(i+u)^{1/2}(1-u)^{1/2}}
\,du
\nonumber\\
&+
\frac{\Delta t^2}{2}
\sum_{j=1}^{i}
\tilde f^{(2)}(t_{j-1})
\int_0^1
\frac{u(1-u)}
     {(j-1+u)^{1/2}}
\Bigl(
(i-j+2-u)^{-1/2}
-
(i-j+1-u)^{-1/2}
\Bigr)
\,du
\nonumber\\
&+
\omega_i(\Delta t),
\nonumber
\end{align}
where $\omega_i(\Delta t)$ collects the higher-order interpolation-remainder terms and satisfies $\omega_i(\Delta t) = \mathcal{O}(\Delta t^3 i^{-3/2})$. Furthermore, one readily verifies that
\begin{equation}
\sum_{j=1}^{i}
\frac{
(i-j+2-u)^{-1/2}
-
(i-j+1-u)^{-1/2}
}
{(j-1+u)^{1/2}}
=
-(i+u)^{-1/2}(1-u)^{-1/2}
+
\mathcal{O}(i^{-3/2}).
\end{equation}
Interchanging the order of summation and integration (which is justified by the Lebesgue dominated convergence theorem), we obtain
\begin{align}
\left|
\psi_i-\sum_{j=1}^{i}\chi_{i,j}
\right|
&=
\mathcal{O}(i^{-3/2}),
\\
\psi_i
&=
\int_0^1
u(1-u)
(i+u)^{-1/2}
(1-u)^{-1/2}
\,du,
\nonumber\\
\chi_{i,j}
&=
\int_0^1
u(1-u)
\frac{
(i-j+2-u)^{-1/2}
-
(i-j+1-u)^{-1/2}
}
{(j-1+u)^{1/2}}
\,du .
\nonumber
\end{align}
Applying Abel's transformation yields
\begin{equation}
\sum_{j=1}^{i}
\tilde f^{(2)}(t_{j-1})\chi_{i,j}
=
\tilde f^{(2)}(t_{i-1})
\sum_{j=1}^{i}\chi_{i,j}
-
\sum_{j=1}^{i-1}
\bigl(
\tilde f^{(2)}(t_j)-\tilde f^{(2)}(t_{j-1})
\bigr)
\sum_{k=1}^{j}\chi_{i,k}.
\end{equation}
Since $\tilde f^{(3)}$ is bounded, we have $| \tilde f^{(2)}(t_{j}) -\tilde f^{(2)}(t_{j-1})| \le \| \tilde f^{(3)}\| \Delta t$, and
\begin{align}
\Big|\tilde f^{(2)}(t_{i})& \psi_i - \sum_{j=1}^{i}\tilde f^{(2)}(t_{j-1}) \chi_{i,j}\Big| \\
\le  &  \left|\tilde f^{(2)}(t_{i}) \left(\psi_i - \sum_{j=1}^{i}\chi_{i,j}\right)\right| + \left|\tilde f^{(2)}(t_{i}) - \tilde f^{(2)}(t_{i-1})  \right| \sum_{j=1}^{i}\chi_{i,j} \nonumber \\
& +\left |\sum_{j=1}^{i-1} (\tilde f^{(2)}(t_{j}) -\tilde f^{(2)}(t_{j-1}))\sum_{k=1}^{j} \chi_{i,k} \right | \nonumber\\
=& \mathcal{O}(i^{-3/2}) + \mathcal{O}(\Delta t i^{-1/2}) +\mathcal{O}(\Delta t ). \nonumber
\end{align}
Since $\Delta t=T/m\le T/i$, and $W_{i+1,0}^{(1)}-W_{i,0}^{(1)}
=
\mathcal{O}(i^{-3/2})$,
it follows that
\begin{equation}
| S_{i+1}-S_i | = \mathcal{O}(\Delta t^{3/2} i^{-3/2}).
\end{equation}
Dividing by $W_{i+1,i}^{(2)}=\mathcal{O}(i^{-1/2})$, we finally get
\begin{equation}
|b_i|
=
\frac{|S_{i+1}-S_i|}{W_{i+1,i}^{(2)}}
= \mathcal{O}(\Delta t^{3/2} i^{-1}) \le \varrho_i K.
\end{equation}
Therefore, the assumptions of Lemma \ref{Kantorovich-Krylov} are satisfied, which completes the proof for the case of a constant kernel. The extension to a non-constant kernel $k(t,u)$ follows by applying the approach developed in \cite{WA-1971}.
\qed


\begin{thebibliography}{99}

%\bibitem[Abundo (2013)]{A-2013} Abundo, M. 2013. On the first-passage area of a one-dimensional jump-diffusion process. \textit{Methodology and Computing in Applied Probability}, 15(1), 85--103. 

\bibitem[Abundo (2003)]{A-2003}
Abundo, M. 2003. On the first-passage time of diffusion processes over a one-sided stochastic boundary. \textit{Stochastic Analysis and Applications}, 21(1), 1--23. 


\bibitem[Atkinson (1974)]{A-1974}
Atkinson, K.E. 1974. An existence theorem for Abel integral equations. \textit{SIAM J. Math. Anal.}, 5, 729--736.

\bibitem[Alili, Patie and Pedersen (2005)]{APP-2005}
Alili, L., Patie, P. and Pedersen, J.L. 2005. Representations of the first hitting time density of an Ornstein-Uhlenbeck process. \textit{Stochastic Models}, 21(4), 967--980.

\bibitem[Avellaneda and Zhu (2001)]{AZ-2001}
Avellaneda, M. and Zhu, J. 2001. Distance to default. \textit{Risk}, 14(12), 125--129.

\bibitem[Benedetto, Sacerdote and Zucca (2013)]{BSZ-2013}
Benedetto, E., Sacerdote, L. and Zucca, C. 2013. A first passage problem for a bivariate diffusion process: numerical solution with an application to neuroscience. \textit{Journal of Computational and Applied Mathematics}, 245, 10--22.

\bibitem[Ben-Hariz, Esstafa and Zaatra (2025)]{BEZ-2025}
Ben-Hariz, S., Esstafa, Y. and Zaatra, H. 2025. Recursive algorithm for transition density approximation and simulation of diffusion processes. \textit{Statistica Neerlandica}, 79(4), e70020.

\bibitem[Black and Cox (1976)]{BC-1976}
Black, F. and Cox, J.C. 1976. Valuing corporate securities: some effects of bond indenture provisions. \textit{Journal of Finance}, 31(2), 351--367.

\bibitem[Boehm et al. (2021)]{BCGS-2021}
Boehm, U., Cox, S., Gantner, G. and Stevenson, R. 2021. Fast solutions for the first-passage distribution of diffusion models with space-time-dependent drift functions and time-dependent boundaries. \textit{Journal of Mathematical Psychology}, 105, 102613.

\bibitem[Blanes et al. (2009)]{BCOR-2009}
Blanes, S., Casas, F., Oteo, J.A. and Ros, J. 2009. The Magnus expansion and some of its applications. \textit{Physics Reports}, 470(5--6), 151--238.

\bibitem[Bormetti et al. (2018)]{BCLP-2018}
Bormetti, G., Callegaro, G., Livieri, G. and Pallavicini, A. 2018. A backward Monte Carlo approach to exotic option pricing. \textit{European Journal of Applied Mathematics}, 29(1), 146--187.

\bibitem[Brunner (2004)]{Brunner(2004)}
Brunner, H. 2004. Collocation methods for Volterra integral and related functional equations. \textit{Cambridge University Press}.  

\bibitem[Brunner (2017)]{B-2017}
Brunner, H. 2017. Volterra Integral Equations: An Introduction to Theory and Applications. \textit{Cambridge University Press}. 

\bibitem[Buonocore, Nobile and Ricciardi (1987)]{BNR-1987}
Buonocore, A., Nobile, A.G. and Ricciardi, L.M. 1987. A new integral equation for the evaluation of first-passage-time probability densities. \textit{Advances in Applied Probability}, 19(4), 784--800.

\bibitem[Caflisch and Moskowitz (1995)]{CM-1995}
Caflisch, R.E. and Moskowitz, B. 1995. Modified Monte Carlo methods using quasi-random sequences. In: \textit{Monte Carlo and Quasi-Monte Carlo Methods in Scientific Computing}. Lecture Notes in Statistics, Springer, New York.

\bibitem[Detemple, Kitapbayev and Shabalin (2025)]{DKS-2025}
Detemple, J., Kitapbayev, Y. and Shabalin, D. 2025. On the pricing of double barrier options under stochastic volatility models: probabilistic approach. \textit{AIMS Mathematics}, 10(9), 22622--22649.

\bibitem[Di Nardo et al. (2001)]{DNPR-2001}
Di Nardo, E., Nobile, A.G., Pirozzi, E. and Ricciardi, L.M. 2001. A computational approach to first-passage-time problems for Gauss-Markov processes. \textit{Advances in Applied Probability}, 33(2), 453--482.

\bibitem[Durbin (1971)]{D-1971}
Durbin, J. 1971. Boundary-crossing probabilities for the Brownian motion and Poisson processes and techniques for computing the power of the Kolmogorov-Smirnov Test. \textit{Journal of Applied Probability}, 8(3), 431--453.

\bibitem[Durbin and Williams (1992)]{DW-1992}
Durbin, J. and Williams, D. 1992. The first-passage density of the Brownian motion process to a curved boundary. \textit{Journal of Applied Probability}, 29(2), 291--304.

\bibitem[Eggermont (1981)]{E-1981}
Eggermont, P.P.B. 1981. A new analysis of the trapezoidal-discretization method for the numerical solution of Abel-type integral equations. \textit{Journal of Integral Equations}, 3(4), 317--332.

\bibitem[Fermo and Occorsio (2022)]{FO-2022}
Fermo, L. and Occorsio, D. 2022. Weakly singular linear Volterra integral equations: A Nyström method in weighted spaces of continuous functions. \textit{J. Comput. Appl. Math.}, 406, 114001.

\bibitem[Fortet (1943)]{F-1943}
Fortet, R. 1943. Les fonctions aléatoires du type de Markoff associées à certaines équations linéaires aux dérivées partielles du type parabolique. \textit{Journal de Mathématiques Pures et Appliquées}, 22(9), 177--243.

\bibitem[Friedman (1964)]{F-1964}
Friedman, A. 1964. \textit{Partial Differential Equations of Parabolic Type}. Prentice-Hall, Englewood Cliffs, NJ.

\bibitem[Giorno et al. (1989)]{GNRS-1989}
Giorno, V., Nobile, A.G., Ricciardi, L.M. and Sato, S. 1989. On the evaluation of first-passage-time probability densities via non-singular integral equations. \textit{Advances in Applied Probability}, 21(1), 20--36.

\bibitem[Guti\'errez et al. (1997)]{GRRT-1997}
Guti\'errez, R., Ricciardi, L.M., Rom\'an, P. and Torres, F. 1997. First-passage-time densities for time-non-homogeneous diffusion processes. \textit{Journal of Applied Probability}, 34(3), 623--631.

\bibitem[Gobet (2000)]{G-2000}
Gobet, E. 2000. Weak approximation of killed diffusion using Euler schemes. \textit{Stochastic Processes and their Applications}, 87(2), 167--197.

\bibitem[Goovaerts, De Schepper and Decamps (2004)]{GDD-2004}
Goovaerts, M., De Schepper, A. and Decamps, M. 2004. Closed-form approximations for diffusion densities: a path integral approach. \textit{Journal of Computational and Applied Mathematics}, 164--165, 337--364.

\bibitem[Herrmann and Zucca (2019)]{HZ-2019}
Herrmann, S. and Zucca, C. 2019. Exact simulation of the first-passage time of diffusions. \textit{J. Sci. Comput.}, 79(3), 1477--1504.

\bibitem[Higham (2005)]{H-2005}
Higham, N.J. 2005. The scaling and squaring method for the matrix exponential revisited. \textit{SIAM J. Matrix Anal. Appl.}, 26(4), 1179--1193.

\bibitem[Hu, Cheng and Berne (2010)]{HCB-2010}
Hu, Z., Cheng, L. and Berne, B.J. 2010. First passage time distribution in stochastic processes with moving and static absorbing boundaries with application to biological rupture experiments. \textit{Journal of Chemical Physics}, 133(3), 034105.

\bibitem[Ichiba and Kardaras (2011)]{IK-2011}
Ichiba, T. and Kardaras, C. 2011. Efficient estimation of one-dimensional diffusion first passage time densities via Monte Carlo simulation. \textit{J. Appl. Probab.}, 48, 390--455.

\bibitem[Kantorovich and Krylov (1958)]{KK-1958}
Kantorovich, L.V. and Krylov, V.I. 1958. Approximate Methods of Higher Analysis. New York: Interscience.

\bibitem[Karatzas and Shreve (1991)]{KS-1991}
Karatzas, I. and Shreve, S.E. 1991. \textit{Brownian Motion and Stochastic Calculus}. 2nd ed., Graduate Texts in Mathematics, Vol. 113. Springer, New York, NY.

\bibitem[Kolmogorov (1931)]{K-1931}
Kolmogorov, A.N. 1931. Über die analytischen Methoden in der Wahrscheinlichkeitsrechnung. \textit{Mathematische Annalen}, 104, 415--458.

\bibitem[Krylov and Zvonkin (1981)]{KZ-1981}
Krylov, N.V. and Zvonkin, A.K. 1981. On strong solutions of stochastic differential equations. \textit{Sel. Math. Sov.}, 1(1), 19--61.

\bibitem[Kurtz (1969)]{K-1969}
Kurtz, T.G. 1969. Extensions of Trotter's operator semigroup approximation theorems. \textit{Journal of Functional Analysis}, 3(3), 354--375.

\bibitem[Lacker, Shkolnikov and Zhang (2020)]{LSZ-2020}
Lacker, D., Shkolnikov, M. and Zhang, J. 2020. Inverting the Markovian projection, with an application to local stochastic volatility models. \textit{Ann. Probab.}, 48(5), 2189--2211.

\bibitem[Leland (1994)]{L-1994}
Leland, H. 1994. Corporate debt value, bond covenants, and optimal capital structure. \textit{Journal of Finance}, 49(4), 1213--1252.

\bibitem[Liang and Borovkov (2023)]{LB-2023}
Liang, V. and Borovkov, K. 2023. On Markov chain approximations for computing boundary crossing probabilities of diffusion processes. \textit{Journal of Applied Probability}, 60(4), 1386--1415.

\bibitem[Linetsky (2004)]{L-2004}
Linetsky, V. 2004. Computing hitting time densities for CIR and OU diffusions: Applications to mean-reverting models. \textit{Journal of Computational Finance}, 7(4), 1--22.

\bibitem[Linz (1985)]{L-1985}
Linz, P. 1985. \textit{Analytical and numerical methods for Volterra equations}. Philadelphia: SIAM.

\bibitem[Loeper and Pironneau (2009)]{LP-2009}
Loeper, G. and Pironneau, O. 2009. A mixed PDE/Monte-Carlo method for stochastic volatility models. \textit{Comptes Rendus de l'Académie des Sciences - Series I - Mathématiques}, 347(1), 559--563.

\bibitem[Lomholt, Ambjörnsson and Metzler (2016)]{LAM-2016}
Lomholt, M.A., Ambjörnsson, T. and Metzler, R. 2016. First-passage processes in the genome. \textit{Annual Review of Biophysics}, 45(1), 117--134.

\bibitem[Lipton and Kaushansky (2020)]{LK-2020}
Lipton, A. and Kaushansky, V. 2020. On the first hitting time density for a reducible diffusion process. \textit{Quantitative Finance}, 20(5), 723--743.

\bibitem[Lipton and Kaushansky (2020)]{LK2-2020}
Lipton, A. and Kaushansky, V. 2020. On three important problems in Mathematical Finance. \textit{The Journal of Derivatives}, 28(1), 123--142.

\bibitem[Masoliver and Perelló (2009)]{MP-2009}
Masoliver, J. and Perelló, J. 2009. First-passage and risk evaluation under stochastic volatility. \textit{Physical Review E}, 80(1), 016108.

\bibitem[Mijatovi\'c (2010)]{M-2010}
Mijatovi\'c, A. 2010. Local time and the pricing of time-dependent barrier options. \textit{Financ. Stoch.}, 14(1), 13--48.

\bibitem[Pagliarani and Pascucci (2012)]{PP-2012}
Pagliarani, S. and Pascucci, A. 2012. Analytical approximation of the transition density in a local volatility model. \textit{Central European Journal of Mathematics}, 10(1), 250--270.

\bibitem[Park and Schuurmann (1976)]{PS-1976}
Park, C. and Schuurmann, F.J. 1976. Evaluations of barrier crossing probabilities of Wiener paths. \textit{Journal of Applied Probability}, 13(2), 267--275.

\bibitem[Patie and Winter (2008)]{PW-2008}
Patie, P. and Winter, C. 2008. First exit time probability for multidimensional diffusions: A PDE-based approach. \textit{J. Comput. Appl. Math.}, 222, 42--53.

\bibitem[Peskir (2002)]{P-2002}
Peskir, G. 2002. On integral equations arising in the first-passage problem for Brownian motion. \textit{Journal of Integral Equations and Applications}, 14(4), 397--423.

\bibitem[Peskir (2005)]{P-2005}
Peskir, G. 2005. A change-of-variable formula with local time on curves. \textit{J. Theor. Probab.}, 18(3), 499--535.

\bibitem[Redner (2001)]{R-2001}
Redner, S. 2001. \textit{A Guide to First-Passage Processes}. Cambridge University Press, Cambridge, UK.

\bibitem[Reghai, Boya and Vong (2012)]{RBV-2012}
Reghai, A., Boya, G. and Vong, G. 2012. Local volatility: smooth calibration and fast usage. \textit{Available at SSRN}.

\bibitem[Ricciardi and Sato (1988)]{RS-1988}
Ricciardi, L.M. and Sato, S. 1988. First-passage-time density and moments of the Ornstein-Uhlenbeck process. \textit{Journal of Applied Probability}, 25(1), 43--57.

\bibitem[Rom\'an, Serrano and Torres (2008)]{RST-2008}
Rom\'an, P., Serrano, J.J. and Torres, F. 2008. First-passage-time location function: Application to determine first-passage-time densities in diffusion processes. \textit{Computational Statistics and Data Analysis}, 52(8), 4132--4146.

\bibitem[Saad (1992)]{S-1992}
Saad, Y. 1992. Analysis of some Krylov subspace approximations to the matrix exponential operator. \textit{SIAM J. Numer. Anal.}, 29(1), 209--228.

\bibitem[Sacerdote and Tomassetti (1996)]{ST-1996}
Sacerdote, L. and Tomassetti, F. 1996. On evaluations and asymptotic approximations of first-passage-time probabilities. \textit{Advances in Applied Probability}, 28(1), 270--284.

\bibitem[Schroder (1989)]{S-1989}
Schroder, M. 1989. Computing the constant elasticity of variance option pricing formula. \textit{The Journal of Finance}, 44(1), 211--219.

\bibitem[Siegert (1951)]{S-1951}
Siegert, A.J.F. 1951. On the first passage time probability problem. \textit{Physical Review}, 81(4), 617--623.

\bibitem[Su, Tretyakov and Newton (2025)]{STN-2025}
Su, H., Tretyakov, M.V. and Newton, D.P. 2025. Deep learning of transition probability densities for stochastic asset models with applications in option pricing. \textit{Management Science}, 71(4), 2922--2952.

\bibitem[Wang, Liu and Zhang (2023)]{WNG-2023}
Wang, T., Liu, S. and Zhang, Z. 2023. Singular expansions and collocation methods for generalized Abel integral equations. \textit{Journal of Computational and Applied Mathematics}, 429, 115240.

\bibitem[Weiss and Anderssen (1971)]{WA-1971}
Weiss, R. and Anderssen, R.S. 1971. A product integration method for a class of singular first kind Volterra equations. \textit{Numer. Math.}, 18, 442--456.

\bibitem[Weiss (1972)]{W-1972}
Weiss, R. 1972. Product integration for the generalized Abel equation. \textit{Math. Comp.}, 26(117), 111--123.

\bibitem[Willard (1997)]{W-1997}
Willard, G.A. 1997. Calculating prices and sensitivities for path-independent derivative securities in multifactor models. \textit{Journal of Derivatives}, 5(1), 45--61.

\bibitem[Zucca and Sacerdote (2009)]{ZS-2009}
Zucca, C. and Sacerdote, L. 2009. On the inverse first-passage-time problem for a Wiener process. \textit{Ann. Appl. Probab.}, 19(4), 1319--1346.



\end{thebibliography}
\end{document}